  \newcommand{\calC}{\mathcal{C}}
  \newcommand{\calE}{\mathcal{E}}
  \newcommand{\calV}{\mathcal{V}}
  \newcommand{\CC}{\mathbb{C}}
  \newcommand{\RR}{\mathbb{R}}
  \renewcommand{\SS}{\mathbb{S}}
  \newtheorem{theorem}{Theorem}[section]
  \newtheorem{proposition}[theorem]{Proposition}
  \newtheorem{question}[theorem]{Question}
   \newtheorem*{theoremA}{Theorem A}
 \newtheorem*{theoremB}{Theorem B}
  \theoremstyle{definition}
  \newtheorem{definition}[theorem]{Definition}
  \newtheorem*{claim*}{Claim}
  \newtheorem{example}[theorem]{Example}
  \newtheorem*{question*}{Question}
  \newtheorem*{answer*}{Answer}
  \newtheorem*{application*}{Application}
  \theoremstyle{remark}
  \newtheorem{remark}[theorem]{Remark}
  \newtheorem*{remark*}{Remark}
  \newcommand{\secref}[1]{\S\ref{Sec:#1}}
  \newcommand{\exaref}[1]{Example~\ref{Exa:#1}}
  \newcommand{\Aut}{\ensuremath{\operatorname{Aut}}\xspace} 
  \newcommand{\sE}{{\sf E}}
  \newcommand{\sP}{{\sf P}}
  \newcommand{\sB}{{\sf B}}  
  \newcommand{\sM}{{\sf M}}  
  \newcommand{\sA}{{\sf A}}   
  \newcommand{\sT}{{\sf T}}     
  \newcommand{\sV}{{\sf V}}
  \newcommand{\param}{{\mathchoice{\mkern1mu\mbox{\raise2.2pt\hbox{$
  \centerdot$}}
  \mkern1mu}{\mkern1mu\mbox{\raise2.2pt\hbox{$\centerdot$}}\mkern1mu}{
  \mkern1.5mu\centerdot\mkern1.5mu}{\mkern1.5mu\centerdot\mkern1.5mu}}}
  \newcommand{\from}{\colon\thinspace} 
   \newcommand{\al}{\alpha}
  \newcommand{\lam}{\lambda}
  \newcommand{\ulam}{\underline{\lambda}}  
  \newcommand{\eps}{\varepsilon}
    \newcommand{\glue}{\mu}
  \newcommand{\out}{\text{Out}(F_n)}
   \renewcommand{\top}{\textup{top}}
   \renewcommand{\bot}{\textup{bot}}
   \newcommand{\md}{\textup{ mod}}
   \newcommand{\dout}{d_\text{out} }
  \newcommand{\din}{d_\text{in}}
    \newcommand{\xfp}{ \accentset{\circ}{X}} 
    \newcommand{\gfp}{\accentset{\circ}{\glue}}
     \newcommand{\HR}{ \textup{HE}} 
     \newcommand{\VR}{ \textup{VE}} 
\begin{document}


  \title    {Mapping tori of small dilatation irreducible train-track maps}
  \author   {Yael Algom-Kfir and Kasra Rafi}
  
  \date{\today}

  \begin{abstract} 
An irreducible train-track map on a graph of rank $n$ is $\sP$--small if its dilatation 
is bounded above by $\sqrt[n]{\sP}$.  We prove that for every $\sP$ there 
is a finite list of mapping tori $X_1, \ldots, X_\sA$, with $\sA$ depending only 
on $\sP$ and not $n$, so that the mapping torus associated to 
every $\sP$--small irreducible train-track map can be obtained by surgery 
on some $X_i$. We also show that, given an integer 
$\sP>0$, there is a bound $\sM$ depending only on $\sP$ and not $n$, 
so that the fundamental group of the mapping torus of any $\sP$--small irreducible 
train-track map has a presentation with less than $\sM$ generators and $\sM$ 
relations.  
\end{abstract}
  
  \maketitle
  

\section{Introduction} \label{Sec:Intro}

Given a space $X$ and a continuous map $f \from X \to X$ the mapping torus of $f$ is the space $M_f = X \times [0,1]/\sim$ where $(x,0) \sim (f(x),1)$. By Van Kampen's theorem, this topological construction corresponds to an HNN extension of the fundamental group. Explicitly, if $\Gamma$ is the fundamental group of $X$ and $\phi=f_* \from \Gamma \to \Gamma$ is the induced homomorphism then the fundamental group of $M_f$ is \[ \Gamma_\phi := \Gamma *_\phi = \langle \Gamma, t \mid t \gamma t^{-1} = \phi(\gamma) \text{ for } \gamma \in \Gamma \rangle \] When $\phi$ is an automorphism, $\Gamma_\phi$ is a semi-direct product.\\

The study of mapping tori of automorphisms of surface groups and free groups has attracted a great deal of interest. It is known, for example, that these groups have a quadratic Dehn function. In the surface case this was proven by Epstein and Thurston \cite{automatic} and for free groups it was shown by Bridson and Groves \cite{BriGroves}. Thurston proved that if $\phi$ is a pseudo-Anosov mapping class then $M_\phi$ admits a hyperbolic metric. Therefore, when the surface is closed $\Gamma_\phi$ is a uniform lattice in $\text{PSL}(2,\CC)$, rendering all pseudo-Anosov mapping tori quasi-isometric. The situation for automorphisms of the free group is more complicated. Work of Brinkman \cite{Brink}, and Bestvina and Feighn \cite{BFcombination} implies that when $\phi$ does not fix a conjugacy class i.e. when it is \emph{atoroidal}, then $\Gamma_\phi$ is Gromov hyperbolic. However, not all atoroidal mapping tori are quasi-isometric to each other. Bowditch \cite{Bow} proved that the Gromov boundary of $\Gamma_\phi$ contains local cut points if and only if $\phi$ preserves a free splitting of $F_n$. Some atoroidal automorphisms preserve a free splitting and some do not, therefore, their mapping tori will not be quasi-isometric to each other. In this paper we study mapping tori of a family of automorphisms, which we call itt automorphisms, via their 2-cell structure induced by particularly nice graph maps. \\

Bestvina and Handel \cite{BH} defined the notion of a train-track map as a kind of normal form for an irreducible outer automorphism (see section \secref{background} for definitions). They proved that any irreducible outer automorphism can be represented by an irreducible train-track map. However, the induced automorphism of an irreducible train-track map need not be irreducible as an automorphism of $F_n$ (see a recent preprint by Kapovich \cite{Kap} for the exact distinction). We call $\phi \in \out$ an itt (irreducible train-track) automorphism if it can be represented by an irreducible train-track map. To an irreducible train-track map $f \from G \to G$ we can assign a real number $\lam_f \geq 1$. If $w \in F_n$ is not $\phi$-periodic then $\lam_f$ is equal to the exponant of the growth rate of $k \to\phi^k(w)$. Thus, $\lam_f$ does not depend on the train-track map $f$ representing $\phi$ (which is not unique). We denote it by $\lam_\phi$ and call it the dilatation of $\phi$.  We say $\phi$ is $\sP$--
small if 
$$
\lambda_\phi \leq \sqrt[n]{\sP}. 
$$
where $n = \text{rank}(\pi_1G)$. Every itt automorphism is $\sP$--small for a large enough $\sP$. Consider 
\[ 
\ulam(n) = \inf\{\log \lam_\phi \mid \phi \text{ is an itt automorphism in } \out \}. 
\]
In sections \secref{Example} and \secref{Lower} we prove that 
\[ \frac{\log 3}{3n-3} \leq \ulam(n) < \frac{\log 2}{n}. \]
In \secref{Example} we give examples of train-track maps whose dilatation is marginally smaller than the upper bound. In \secref{Lower} we give an argument for the lower bound. For comparison, if $S$ is a closed surface of negative Euler characteristic $\chi(S)$ denote \[\underline{\mu}(n) = \inf\{\log \lam_\phi \mid \phi \text{ is a pseudo-Anosov map on }S \text{ with } |\chi(S)| = n \} \] then
\[ \frac{\log 2}{6 n} \leq 
\underline{\mu}(n) \leq 
\frac{2\log\left(\frac{3+\sqrt{5}}{2}\right)}{n}. 
\] 
The upper bound was proved by Aaber-Dunfield and Hironaka \cite{AD, Hiro} and the lower bound by Penner \cite{Pen}. This is false when the surface is not closed. For $S = S_{g,p}$ with $g \geq 2$ constant and $p\geq1$ Tsai \cite{Tsai} proved that the minimal dilatation is on the order of $\frac{\log p}{p}$. \\

Our first theorem is an analogue of the following theorem of Farb, Leininger and Margalit \\

\begin{theorem}\cite{FLM, Agol} For each $\sP > 1$ there exist finitely many complete, noncompact, hyperbolic 3--manifolds $M_1, \dots , M_r$ fibering over $\SS^1$, with the property that any pseudo-Anosov homeomorphism of a surface $S$ with dilatation smaller than $\sP^{1/|\chi(S)|}$ occurs as the monodromy of some bundle obtained by Dehn filling one of the $M_i$ along boundary slopes of a fiber. 
\end{theorem}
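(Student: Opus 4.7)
The plan is to exploit the dictionary between dilatation and hyperbolic volume: show that every $M_\phi$ in the family has volume bounded by a constant depending only on $\sP$, apply J\o rgensen--Thurston finiteness to reduce to a finite list of cusped $3$--manifolds, and finally use Thurston's fibered face theorem to identify the Dehn fillings that recover the given mapping tori.

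The first step is to invoke Thurston's hyperbolization theorem for fibered $3$--manifolds to endow each $M_\phi$ with a complete hyperbolic metric of finite volume. One then uses the volume estimate of Kojima--McShane,
\[ \Vol(M_\phi) \leq C \cdot |\chi(S)| \cdot \log \lambda_\phi, \]
which combined with the hypothesis $\lambda_\phi \leq \sP^{1/|\chi(S)|}$ yields a uniform bound $\Vol(M_\phi) \leq C\log \sP$. By the J\o rgensen--Thurston theorem, the set of complete orientable hyperbolic $3$--manifolds of volume at most $C\log\sP$ is obtained, up to homeomorphism, by Dehn filling cusps of a finite collection $\{N_1,\ldots,N_r\}$ of cusped hyperbolic $3$--manifolds. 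This immediately supplies finitely many ``parent'' manifolds $N_i$ from which every $M_\phi$ in the family is a Dehn filling.

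To upgrade each $N_i$ to a fibered manifold with the correct fibered face structure, I would invoke Thurston's theorem that monodromies of fibrations over $\SS^1$ correspond to primitive integral classes in open cones over fibered faces of the unit ball of the Thurston norm on $H^1(N_i;\RR)$. A limiting argument --- realizing each $N_i$ as a geometric limit of a sequence of $M_\phi$'s in the family --- combined with Fried's continuous extension of the normalized entropy to the closure of the fibered cone, forces $N_i$ itself to fiber over $\SS^1$, with fiber a once-punctured surface whose boundary slopes are precisely the filling slopes that recover $M_\phi$. The main obstacle is the first step: the geometric comparison between $\log\lambda_\phi$ and $\Vol(M_\phi)$. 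This is a deep input resting on Brock's theorem relating Weil--Petersson translation length to volumes of hyperbolic mapping tori, and on Kojima--McShane's control of WP translation length by topological entropy; everything downstream is then a soft finiteness argument.
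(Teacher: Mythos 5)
First, note that this paper does not prove the statement in question: it is cited from \cite{FLM, Agol} purely as motivation for Theorem A, and the body of the paper develops the free-group analogue. So your proposal must be measured against the published proofs of Farb--Leininger--Margalit and Agol, not against anything in this text.

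Your Steps 1 and 2 are sound: a bound $\Vol(M_\phi) \leq C\log\sP$ does follow from $\log\lambda_\phi \leq \log\sP / |\chi(S)|$ (via Kojima--McShane, or via earlier Brock-type estimates), and J\o rgensen--Thurston then produces finitely many cusped parents $N_i$ with $M_\phi$ a Dehn filling of some $N_i$. The genuine gap is in Step 3, and it is precisely where the content of the theorem lives. The cusps of the J\o rgensen--Thurston parents arise from drilling short geodesics of $M_\phi$, and there is no a priori relation between these short geodesics and the fibration. In the relevant regime ($\lambda_\phi \to 1$, $|\chi(S)| \to \infty$) the Teichm\"uller axis of $\phi$ dips into the thin part of moduli space, so $M_\phi$ typically contains short geodesics that lie on a fiber; such a curve is homologically trivial in the fiber direction, cannot be isotoped transverse to the suspension flow, and drilling it produces a manifold that does not fiber compatibly with the fibration of $M_\phi$, let alone with boundary slopes matching the filling slopes. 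Fried's theorem concerns the boundary behaviour of normalized entropy within the fibered cone of a \emph{single fixed} manifold; it does not say that a geometric limit of fibered manifolds fibers, which is what your ``limiting argument'' would need. You also describe the putative fiber as a ``once-punctured surface,'' which is incorrect --- it would have many punctures. Finally, you identify Step 1 as ``the main obstacle,'' but Step 1 is the step that is actually a clean citation; Step 3 is where the work is.

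The published arguments go differently and supply exactly the missing input. Both FLM and Agol first pass to the \emph{fully-punctured} mapping torus: the mapping torus of the restriction of $\phi$ to $S$ minus the singular points of the invariant foliations. This cusped manifold has its cusps along closed orbits of the suspension flow, which are transverse to the fibration by construction, so filling them recovers $M_\phi$ along boundary slopes of a fiber \emph{automatically} --- no appeal to J\o rgensen--Thurston is needed to identify which curves to drill. The hard step is then showing that the set of homeomorphism types of fully-punctured mapping tori is finite for $\sP$-small $\phi$. FLM prove this by building a Markov-partition spine and bounding its complexity by $\sP$, which is exactly the argument the present paper mirrors for graph maps using the Ham--Song inequality in \propref{fQfinite1}; Agol instead shows the fully-punctured mapping torus carries a canonical veering triangulation with at most $C(\sP)$ tetrahedra. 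Either form of combinatorial control is what you would need to repair Step 3, so the soft hyperbolic-geometry route does not actually bypass the combinatorics --- it loops back to it.
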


\noindent We define mapping tori surgery and prove the following 

\begin{theoremA} \label{Thm:Small}
For every $\sP>1$, there is a finite set of $2$--complexes $X_1, \ldots , X_\sA$, which are mapping tori of self maps of graphs, so that the following holds. If $f \from G \to G$ is a $\sP$--small irreducible train-track map on a graph $G$, then $M_f$ is homeomorphic to a $2$--complex that is obtained by surgery on some $X_i$.   
\end{theoremA}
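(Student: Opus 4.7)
The plan, inspired by the Farb--Leininger--Margalit theorem, is to show that $M_f$ decomposes into a bounded-complexity \emph{core} and a collection of \emph{product strips} encoded by integer surgery data. The hypothesis $\lambda_f^n \leq \sP$ forces most edges of $G$ to be shifted nearly periodically by $f$, so the combinatorially nontrivial part of $f$ concentrates in a subgraph whose size depends only on $\sP$ and not on $n$.

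\textbf{Step 1 (spectral/combinatorial analysis).} First I would analyze the transition matrix $M$ of $f$, an irreducible non-negative integer matrix with Perron--Frobenius eigenvalue $\lambda_f$ satisfying $\lambda_f^n \leq \sP$. Because the row of $M$ corresponding to an edge $e$ sums to the combinatorial length $|f(e)|$, bounding $\lambda_f$ close to $1$ forces all but boundedly many edges to have $|f(e)|=1$; call these the \emph{shift edges} and the remainder the \emph{expanding edges}. Grouping shift edges along their common forward orbit under $f$ decomposes them into maximal \emph{shift chains}, and a spectral/Perron--Frobenius argument shows that the number of chains and the number of expanding edges are each bounded by a function of $\sP$ alone.

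\textbf{Step 2 (collapse to a core).} Next I would examine the 2-cell structure of $M_f$: each edge $e$ contributes a rectangle $e\times[0,1]$ whose top side is subdivided according to the edge path $f(e)$. A rectangle over a shift edge is an undivided square, and consecutive rectangles in a shift chain glue into a long product strip $[0,L]\times[0,1]$ carrying no combinatorial data beyond its length $L$ and its attaching maps at the two ends. Collapse each such strip to a unit-length strip to form a quotient 2-complex $\bar X_f$. By Step 1, $\bar X_f$ has boundedly many vertices, edges, and 2-cells.

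\textbf{Step 3 (surgery and finiteness).} Define mapping-tori surgery as the inverse operation: given $\bar X_f$ with distinguished strips, re-subdivide each strip by a chosen positive integer. Then $M_f$ is recovered from $\bar X_f$ by surgery with parameters equal to the lengths of the original shift chains. Since $\bar X_f$ has bounded complexity depending only on $\sP$, it ranges over finitely many combinatorial types $X_1,\dots,X_\sA$, establishing the theorem.

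\textbf{Main obstacle.} The delicate point is Step 2: making the collapse canonical and ensuring the quotient is genuinely a mapping torus of a graph self-map rather than a miscellaneous 2-complex. At branch vertices of $G$---where several shift chains meet, or where a chain meets an expanding edge---the train-track gate structure and the identifications along the collapsed strips must be tracked with care, so that the surgery operation recovers $M_f$ as a true homeomorphism and not merely as a homotopy equivalence. A related subtlety is establishing uniformity in $\sP$ in the spectral bound of Step 1, for which the irreducibility hypothesis and the exact rank $n = \mathrm{rank}(\pi_1G)$ in the exponent of $\sqrt[n]{\sP}$ are both essential.
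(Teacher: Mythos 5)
Your plan is the same strategy as the paper's: identify a bounded ``core'' (the paper's archetype $f_Q\from Q\to Q$ and its mapping torus), encode the rest of $M_f$ by long product strips (the paper's ``stacks'' and their rectangles), and recover $M_f$ by re-subdividing the strips (the paper's mapping-tori surgery). So the architecture is right. There are, however, two genuine gaps between your sketch and a proof.

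First, in Step 1 you assert that ``a spectral/Perron--Frobenius argument shows'' that the number of expanding edges and the number of chains are bounded in terms of $\sP$ alone. This is not a soft consequence of Perron--Frobenius theory: nothing in the generic spectral picture prevents a matrix with spectral radius close to $1$ from having many rows of length $\geq 2$ when the size $m$ of the matrix grows with $n$. The paper's proof of this bound (Proposition~\ref{Prop:fQfinite1}) invokes a specific quantitative inequality of Ham and Song,
\[
1+\sum_{v\in\Gamma_f}\bigl(\dout(v)-1\bigr)\;=\;1+\sum_{v\in\Gamma_f}\bigl(\din(v)-1\bigr)\;\leq\;\lambda_f^{\,n},
\]
which, combined with $\lambda_f^n\leq\sP$, directly bounds $\sum_j(|f(e_j)|-1)$ and the number of edges with several preimages. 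Without this (or an equivalent estimate as in the Farb--Leininger--Margalit argument for surfaces), your Step 1 has no content. It is the quantitative heart of the theorem, and you have treated it as routine.

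Second, your ``shift chains'' should not be determined solely by the condition $|f(e)|=1$; the correct equivalence must also break at edges that lie in the image of some mixed edge or have more than one $f$-preimage (the paper's \emph{dynamic} edges). If you group purely by ``$|f(e)|=1$ forward orbits,'' two chains can merge ($f(e)=f(e')=e''$), and the putative strip over $e''$ has two rectangles attached along its top, so the collapse of Step 2 is not a homeomorphism onto a square and the quotient need not be a mapping torus at all. This is precisely the ``main obstacle'' you flagged, and it does require resolution rather than a remark. The paper resolves it by cutting along the vertical graph $U$ to obtain $X_U$, identifying its outer boundary with a graph $K$, and defining surgery as re-gluing an \emph{admissible} trap graph $S$ via a \emph{filling} $\psi$ satisfying explicit compatibility constraints (Definitions~\ref{Def:admissible}--\ref{Def:Filling}); Propositions~\ref{Prop:SurgeryInv}--\ref{Prop:fQsurgery} then verify that the operation is an involution on mapping tori and recovers $M_{f_Q}$. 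Your proof would need an equivalent formalization before Step 3 can go through.
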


In particular, surgery does not alter the number of ``essential" 2-cells in the mapping torus and only changes the structure of the edges. As a result we are able to prove a universal boundedness result on presentations of these fundamental groups. 

\begin{theoremB} \label{Thm:BoundedPres}
There is a number $\sM$ depending only on $\sP$ (independent of $n$) so that 
if $\phi \in \out$ is a $\sP$--small  itt automorphism then $\Gamma_\phi$ has 
a presentation with at most $\sM$ generators and $\sM$ relations.
\end{theoremB}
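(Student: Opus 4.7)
The plan is to deduce Theorem B directly from Theorem A by reading off a CW presentation of $M_f$ and showing that surgery controls both the number of generators and the number of relations.

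First I would fix the finite list $X_1, \ldots, X_\sA$ provided by Theorem A. Each $X_i$, being a mapping torus of a self-map of a finite graph, carries a canonical CW structure: the 0-cells are the vertices of the underlying graph, the 1-cells consist of the edges of the graph together with one ``vertical'' 1-cell per vertex, and the 2-cells are in bijection with the edges of the graph, encoding the gluing $(x,0) \sim (f(x),1)$. Let $V_i, E_i, C_i$ denote the numbers of cells in each dimension and set
\[
\sM_0 \;:=\; \max_{1 \le i \le \sA} \max\bigl(E_i - V_i + 1,\; C_i\bigr).
\]
Since the list is finite, $\sM_0$ depends only on $\sP$. Now let $\phi$ be a $\sP$--small itt automorphism represented by an irreducible train-track map $f \from G \to G$. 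Theorem A furnishes some index $i$ together with a surgery producing $M_f$ from $X_i$. As emphasized in the paragraph preceding Theorem B, this surgery (i) preserves the number of essential 2-cells, so $M_f$ still has $C_i$ 2-cells, and (ii) modifies only the 1-skeleton via edge operations (subdivisions/folds) that simultaneously create equal numbers of vertices and edges, hence leave $E-V$ invariant. Consequently the 1-skeleton of $M_f$ has first Betti number $E_i - V_i + 1$.

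With these two invariants under control, I would build the presentation directly from the CW structure: collapse a maximal spanning tree of the 1-skeleton of $M_f$, obtaining $E_i - V_i + 1$ generators (one per non-tree 1-cell) together with $C_i$ relations (one per 2-cell, given by its attaching word). Hence $\Gamma_\phi = \pi_1(M_f)$ admits a presentation with at most $\sM_0$ generators and $\sM_0$ relations, and we may take $\sM := \sM_0$.

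The hard part, assuming Theorem A, is verifying rigorously that the notion of surgery introduced earlier in the paper truly preserves $E - V$ on the 1-skeleton and the total count of 2-cells. This should reduce to a finite combinatorial check on the allowed local moves: each move must be a combination of edge subdivisions, edge identifications, and replacements of edge-arcs by edge-arcs with the same endpoints, none of which introduces new 2-cells or alters the Euler characteristic of the 1-skeleton. Provided this is the content of the ``essential 2-cells'' remark, no further estimates beyond those already contained in Theorem A are needed, and the bound $\sM$ is independent of $n$ precisely because the list $X_1, \ldots, X_\sA$ is.
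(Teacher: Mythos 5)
Your plan is the right one in outline, and it matches the paper's strategy of reading a presentation of $\Gamma_\phi$ off a CW structure on $M_f$. But the two invariance claims that carry all the weight---that surgery ``preserves the number of essential 2-cells, so $M_f$ still has $C_i$ 2-cells'' and that it ``leaves $E-V$ invariant''---are false for the CW structure you are actually computing with. The last step in Definition~\ref{Def:Filling} explicitly subdivides each rectangle $R_\eps$ by inserting $ht_\psi(R_\eps)-1$ parallel edges, and the endpoints of these parallel edges are already vertices of the new vertical graph $S$, so this move adds edges and 2-cells \emph{without} adding vertices. Passing from $X_i=M_{f_Q}$ (where every height is $1$) to $M_f$ therefore increases $C$ and $E-V$ each by $\sum_\eps(|\eps|-1) = |E(G)|-|E(Q)|$, which grows with $n$. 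So the ``finite combinatorial check'' you flag as the hard part does not go through: the subdivision move is not Euler-neutral on the 1-skeleton, and it does create new 2-cells.

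What is true---and what the paper uses via Proposition~\ref{Prop:finiteCW}---is that the \emph{floor-plan} $\xfp$ (the unsubdivided version, whose 2-cells are the rectangles) has boundedly many 2-cells and bounded $E-V$, and is a CW complex homeomorphic to $M_f$. Proposition~\ref{Prop:SameFP} gives $X_U=Y_W$, so the rectangle count of $\xfp$ equals the cell count of $M_{f_Q}$, which is bounded by Proposition~\ref{Prop:fQfinite1}; and since $U$ is a disjoint union of traps it contributes equal numbers of vertices and edges, so $E-V$ for $\xfp$ equals the number of rectangles. If you restate your argument for $\xfp$ rather than for the standard mapping-torus CW structure on $M_f$, the Betti-number/2-cell count does control the presentation. (You would then actually need slightly less than Proposition~\ref{Prop:finiteCW} proves: the paper also prunes the valence-2 vertices of $U$ to bound the total cell count of the CW structure, but those vertices do not affect $E-V$ or $C$, so for Theorem~B alone that pruning is optional.) As written, though, your proof identifies the right invariants but asserts their invariance under a move that visibly breaks them.
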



\noindent\textbf{Acknowledgements.} The authors would like to thank PCMI for their hospitality. We express our gratitude to Catherine Pfaff and Lee Mosher for inspiring conversations.

\section{background}\label{Sec:background}

\subsection{Train-track maps and dilatation} 
\begin{definition}
A marked graph is a finite 1-complex $G$ together with an isomorphism $\tau:F_n \to \pi_1(G,*)$. We usually supress the isomorphism and denote it by $G$.
\end{definition}
Let $\phi$ be an automorphism of the free group $F_n$. 
\begin{definition}
Let  $f:G \to G$ be a map on a marked graph $G$ whose fundamental group is isomorphic to $F_n$ via $\tau$. The map $f$ is a topological representative of $\phi$ if
\begin{enumerate}
\item $f$ takes vertices to vertices, 
\item images of edges are immersed paths 
\item $\phi = \tau^{-1} \circ f_* \circ \tau$. 
\end{enumerate}
\end{definition}
Let $m$ be the number of edges in $G$. \emph{The transition matrix} 
$M= (a_{ij})$ associated to $f$ is an $m \times m$ matrix defined by  
\[ 
a_{ij} \text{ is the number of times }f(e_j)  \text{crosses}e_i \text{in either direction}
\] 
Observe that $M$ is a non-negative matrix. It is called 
\emph{Perron-Frobenius} if there is some $k$ so that all of the entries of $M^k$ are strictly positive. 
Let $\lam$ be the largest-modulus eigenvalue of $M$. Perron-Frobenius theory states that $\lam \geq 1$ is real. Recall further that if $\lam=1$ then $M$ is a permutation matrix and $f$ is a homeomorphism. The number $\lam$ is called the \emph{Perron-Frobenius  eigen-value} of $f$. \\

\begin{definition}
A topological representative $f$ of $\phi$ is a \emph{train-track map} if $f^k(e)$ is 
immersed for all edges $e \in G$ and all powers $k>0$. The map $f$ is an 
\emph{irreducible} train-track map if it is a train-track map, and its transition 
matrix is Perron-Frobenius.
\end{definition}

An equivalent description follows. Endow the graph $G$ with an orientation once and for all. A pair of directed edges $\{e,e' \}$ is called a \emph{turn} if $i(e) = i(e')$. Let $Df(e)$ denote the first edge in the path $f(e)$. A turn $\{e,e' \}$ is \emph{collapsable} if $Df(e) = Df(e')$. It follows from the work of Nielsen \cite{NielsenPres} and Stallings \cite{Stallings} that if $f$ is not a homeomorphism then it has a collapsable turn. The map $Df$ sends a turn to a turn. A turn $\{e,e'\}$ is \emph{illegal} if it is mapped to a collapsable turn under a positive iterate of $Df$, otherwise, it is \emph{legal}. An edge path is legal if it crosses no illegal turns. The proof of the following proposition is clear.

\begin{proposition} 
A map $f \from G \to G$ is a train-track map if and only if $f(e)$ is legal for every edge $e$. 
\end{proposition}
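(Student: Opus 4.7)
The plan is to prove both implications by tracking how turns propagate under iterates of $Df$, using the identity that the first edge of $f^j(e)$ is $(Df)^j(e)$ for every $j \geq 0$ and every directed edge $e$ (which one verifies by induction, since $f^j(e) = f(f^{j-1}(e))$ begins with $f$ applied to the first edge of $f^{j-1}(e)$).

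For the forward direction I would argue the contrapositive. Suppose $f(e_0)$ crosses an illegal turn $\{e_1,e_2\}$, so $f(e_0)$ contains the subpath $\bar{e}_1\cdot e_2$. By definition of illegal, there exists $m\geq 0$ with $(Df)^{m+1}(e_1)=(Df)^{m+1}(e_2)$; call this common edge $\epsilon$. Consider $f^{m+2}(e_0)=f^{m+1}(f(e_0))$: the subpath $\bar{e}_1\cdot e_2$ contributes a subpath $\overline{f^{m+1}(e_1)}\cdot f^{m+1}(e_2)$ to $f^{m+2}(e_0)$. Since both $f^{m+1}(e_1)$ and $f^{m+1}(e_2)$ begin with $\epsilon$, this concatenation reads $\cdots\bar{\epsilon}\cdot\epsilon\cdots$ at the junction, an explicit backtrack. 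Hence $f^{m+2}(e_0)$ is not immersed and $f$ is not a train-track map.

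For the reverse direction I would induct on $k\geq 1$, showing that $f^k(e)$ is legal — and therefore immersed, since a backtrack at an interior vertex corresponds to a degenerate turn $\{\alpha,\alpha\}$ with $Df(\alpha)=Df(\alpha)$, which is collapsable and hence illegal. The base case $k=1$ is the hypothesis. For the inductive step, write $f^k(e)=e_1e_2\cdots e_l$, so $f^{k+1}(e)=f(e_1)\cdots f(e_l)$. Every turn of $f^{k+1}(e)$ is either internal to some $f(e_i)$ (legal by the base case) or is the junction turn between $f(e_i)$ and $f(e_{i+1})$. The latter equals $\{Df(\bar{e}_i),Df(e_{i+1})\}=Df\{\bar{e}_i,e_{i+1}\}$, because the last edge of $f(e_i)$ is the reverse of the first edge of $f(\bar{e}_i)$. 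By the inductive hypothesis $\{\bar{e}_i,e_{i+1}\}$ is legal, and $Df$ preserves legality since $\{\alpha,\beta\}$ being legal means exactly that $(Df)^j\{\alpha,\beta\}$ is non-collapsable for every $j\geq 0$, a property which evidently passes to $Df\{\alpha,\beta\}$.

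The main obstacle is just the bookkeeping of first edges and junctions — identifying $(Df)^j(e)$ with the first edge of $f^j(e)$, recognizing the junction turn of $f^{k+1}(e)$ between $f(e_i)$ and $f(e_{i+1})$ as $Df\{\bar{e}_i,e_{i+1}\}$, and treating degenerate "turns" $\{\alpha,\alpha\}$ as collapsable so that legal paths are automatically immersed. Once these routine identifications are in place, both directions follow immediately, consistent with the authors' remark that the proof is clear.
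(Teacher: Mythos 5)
Your proof is correct. The paper simply asserts that the proposition is ``clear'' and gives no argument, and what you have written is the standard one: you identify the first edge of $f^j(e)$ with $(Df)^j(e)$, derive the explicit backtrack $\bar\epsilon\epsilon$ in $f^{m+2}(e_0)$ for the forward (contrapositive) direction, and for the converse induct on $k$ using that junction turns of $f^{k+1}(e)$ are $Df$-images of turns in $f^k(e)$ together with the facts that $Df$ preserves legality and that legal paths are immersed (degenerate turns being collapsable, hence illegal). The one point you glide over slightly --- that the paper's definition of ``illegal'' requires a \emph{positive} iterate of $Df$ to produce a collapsable turn, while you quantify over $j\geq 0$ --- is harmless: a collapsable turn $\{e,e'\}$ is itself illegal because $Df\{e,e'\}$ is a degenerate turn and therefore collapsable, so the two characterizations of legality agree.
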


\begin{definition}
\emph{A trap} is an oriented connected graph with the property that every vertex has a unique edge initiating from it. Topologically, a trap is a directed circle glued to directed trees where each tree is glued to the circle at a single point and the edges of the tree are directed towards the circle, as if there was an attracting particle in the middle of the circle. 
\end{definition}

\begin{proposition}
$f \from G \to G$ is a train-track map iff $f^i(e)$ does not contain a backtracking segment (an edge followed by its inverse) for $1 \leq i \leq m$ where $m$ is the number of edges in $G$.
\end{proposition}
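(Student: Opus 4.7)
The ``only if'' direction is immediate from the definition of a train-track map. For the converse, suppose $f^i(e)$ is immersed for every edge $e$ and every $1\le i\le m$, and assume toward contradiction that $f$ is not a train-track map. Let $k$ be the smallest integer such that $f^k(e_0)$ contains a backtrack for some edge $e_0$; then $k>m$ and $f^{k-1}(e_0)$ is immersed. Writing $f^{k-1}(e_0)=e_1\cdots e_\ell$ we have $f^k(e_0)=f(e_1)\cdots f(e_\ell)$, and a backtrack in this concatenation is either internal to some $f(e_j)$ or occurs at a seam between $f(e_j)$ and $f(e_{j+1})$. The internal case is ruled out because $f(e_j)$ is immersed by hypothesis, so the turn $T^{(k-1)}:=\{\bar e_j,e_{j+1}\}$ of $f^{k-1}(e_0)$ must be \emph{collapsable}, i.e.\ $Df(\bar e_j)=Df(e_{j+1})$.

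Iterating this ``internal versus seam'' dichotomy is the heart of the argument. Define the \emph{degeneracy degree} $d(T)$ of a turn $T=\{a,b\}$ to be the least $d\ge 0$ with $(Df)^d(a)=(Df)^d(b)$, or $\infty$ if no such $d$ exists; so $d(T^{(k-1)})=1$. Decomposing $f^{k-1}(e_0)=f(f^{k-2}(e_0))$, the turn $T^{(k-1)}$ is either internal to $f(\rho)$ for some edge $\rho$ appearing in $f^{k-2}(e_0)$, or appears as a seam $Df(T^{(k-2)})$ for a turn $T^{(k-2)}$ of $f^{k-2}(e_0)$. In the first case $f^2(\rho)$ contains a backtrack, violating the hypothesis (since $2\le m$); in the second case $d(T^{(k-2)})=2$. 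Inductively, as long as $j+1\le m$, the descent produces a turn $T^{(k-j)}$ of $f^{k-j}(e_0)$ with $d(T^{(k-j)})=j$, eventually yielding a turn with high degeneracy degree at a positive level $k-j\ge 1$.

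The final step is to derive the contradiction from the existence of such a turn. Here I would use that $Df$ is a self-map of the finite set of directed edges of $G$: its functional graph decomposes into ``rho''-shaped components, each consisting of a single attracting cycle with trees hanging off. Two distinct $Df$-orbits that ever coincide must merge while at least one of them is still in its pre-cycle tail (once both are on the cycle, the shift between them is preserved by $Df$), so their convergence time is bounded by the length of the longest tail, and in particular by the total number of directed edges of $G$. This yields an absolute upper bound on $d(T)$ whenever $d(T)<\infty$, and combined with the descent gives the desired contradiction. The main obstacle is precisely matching the combinatorial bound from the functional-graph analysis against the threshold $j=m$ produced by the descent; one must exploit that along the chain of descended turns the intermediate directed edges visited by the two converging $Df$-orbits are sufficiently constrained by the graph-map structure to make the $2m$ directed edges of $G$ insufficient to accommodate a turn of degeneracy degree $m$ at the terminal level.
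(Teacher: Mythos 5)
Your approach is in essence the paper's: study the derivative $Df$ as a self-map of the $2m$ oriented edges, observe that its functional graph (the paper's ``derivative'' graph $\Delta_f$, a disjoint union of traps) consists of attracting cycles with trees feeding in, and bound the collapse time of an illegal turn by the depth of a vertex below its attracting cycle. Your ``descent'' through levels is a more elaborate route to a point the paper reaches directly: if an illegal turn $T$ crossed by some $f(e)$ has degeneracy degree $k$ (your notation), then $f^{k+1}(e)$ backtracks, and conversely any first backtracking iterate $f^{i}(e)$ traces back to a turn crossed by $f(e)$ with degeneracy degree $i-1$; so the whole ``iff'' reduces to bounding $k$ by the depth in $\Delta_f$, and the multi-step descent is not really needed.

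The gap you flag in your last paragraph is genuine, and you are right to be worried about it. Since $\Delta_f$ has $2m$ vertices, the depth of a vertex --- hence the collapse time --- is a priori bounded only by $2m-1$, which yields the statement with $2m$ in place of $m$. The paper's own proof merely asserts that the two orbits ``reach the cycle after no more than $m-1$ steps'' with no argument, and I see no reason it should be $m-1$ rather than $2m-1$. Your hope that ``the graph-map structure'' closes this factor of two is not established, and for arbitrary graph self-maps it is in fact false: on the rose with two petals $a,b$ (so $m=2$) take $f(a)=b\,a$, $f(b)=\bar{a}\,b$; then $f(e)$ and $f^2(e)$ are reduced for both edges, while $f^3(b)=\bar{a}\,\bar{b}\,\bar{b}\,a\,\bar{a}\,\bar{b}\,\bar{a}\,b$ contains the backtrack $a\,\bar{a}$. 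This $f$ has abelianized determinant $2$, so it is not a homotopy equivalence and therefore not a literal counterexample to the proposition as stated in the paper (which implicitly requires $f$ to represent an automorphism); but it shows that any correct proof of the sharper bound $m$, rather than $2m$, must somewhere invoke the homotopy-equivalence hypothesis, which neither your argument nor the paper's does. The statement that the trap argument --- yours and the paper's alike --- actually proves is the one with $2m$ in place of $m$, which still suffices for the finite-check application made of it in the paper.
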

\begin{proof}
We must show that if a turn does not collapse after $m$ iterations of $f$ then it does not collapse at all, i.e. it is legal. In order to check whether or not a turn is legal we construct an auxiliary graph $\Delta_f$, called \emph{the derivative} of $f$. There is a vertex in $\Delta$ for each directed edge and an edge from $e$ to $e'$ if $Df(e) = e'$. 

The graph $\Delta_f$ is a union of disjoint traps. In order to check if a turn $\{e,e'\}$ is illegal we start with $v_e,v_{e'} \in \Delta_f$ corresponding to the directed edges $e,e'$ in $G$. We form the sequences  $a_i$ and $b_i$ of vertieces in $\Delta_f$ starting with $a_1=v_e, b_1 = v_{e'}$ and $a_i,b_i$ are the terminal vertices of the directed edges initiating at $a_{i-1},b_{i-1}$. The sequences $a_i$ and $b_i$ are like traps coming into orbit and rotating about the star. Eventually they move in a directed circle. $a_i$ and $b_i$ reach the cycle after no more than $m-1$ steps. If both $a_i$ and $b_i$ are on a directed circle and $a_i \neq b_i$ then they will never coincide. 
\end{proof}
\noindent This shows that deciding if a map is a train-track map is a finite check. \\

\begin{definition}\label{Def:naturalMetric}
Let $\overrightarrow{v}$ be a left eigen-vector of $M$ corresponding to the Perron-Frobenius eigen-value $\lam_f$ i.e. $v^t M = \lam v^t$. \emph{The natural metric} on $G$ induced by a train-track map $f$ is given by setting the length of the edge $e_i$ to be $v_i$ the $i$--th coordinate of $\overrightarrow{v}$. 
\end{definition}
The natural metric on $G$ is well defined up to multiplication by a scalar. When $G$ is endowed with the natural metric $len(f(e)) = \lam len(e)$ for every edge $e$ in $G$.

\begin{proposition}
If $\phi \in \Aut(F_n)$ is an itt automorphism represented by a train-track map $f \from G \to G$ then for any non-periodic $w$ in $\phi$, and for any basis $X$ in $F_n$: 
\begin{equation} \label{eq2}
\log\lam_f = \lim_{k \to \infty} \frac{\log|\phi^k(w)|_X}{k}
\end{equation}
Where $|w|_X$ denotes the word-length in the basis $X$, of the cyclically reduced word $w$. \\
In particular, $\lam_f$ is the same for all train-track representatives $f$ of $\phi$.
\end{proposition}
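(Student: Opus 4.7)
The plan is to pass to the natural metric on $G$ from \defref{naturalMetric}, in which $f$ stretches every legal edge-path by a factor of exactly $\lam_f$. Under the marking $\tau$, the conjugacy class of $w$ corresponds to a cyclically tight loop $\gamma_w \subset G$, and the conjugacy class of $\phi^k(w)$ corresponds to the loop $\gamma_k$ obtained by cyclically tightening $f^k(\gamma_w)$; write $\ell_k$ for its natural length. The natural and combinatorial (edge-count) metrics on $G$ are bi-Lipschitz, and for any basis $X$ the word length $|\param|_X$ of a cyclically reduced word is bi-Lipschitz to the combinatorial length of the corresponding tight loop (with constants depending only on $X$ and the fixed marking $\tau$). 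So the proposition reduces to showing $\lim_k \tfrac{1}{k}\log \ell_k = \log \lam_f$.

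For the upper bound, tightening never increases length, so $\ell_k \leq \text{len}_{\text{nat}}(f^k(\gamma_w)) = \lam_f^k \cdot \text{len}_{\text{nat}}(\gamma_w)$, and taking logs and dividing by $k$ gives $\limsup_k \tfrac{1}{k}\log \ell_k \leq \log \lam_f$.

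For the lower bound I would use the bounded cancellation property of train-track maps: there is a constant $C=C(f)$ such that whenever $\sigma$ is a legal sub-path of a tight loop $\gamma$, the image $f(\sigma)$ appears inside $f(\gamma)_{\text{tight}}$ except for possible truncations of length at most $C$ at each end. Iterating, a legal sub-path $\sigma \subset \gamma_w$ gives rise to a legal sub-path of $\gamma_k$ of length at least $\lam_f^k |\sigma| - 2C(\lam_f^k - 1)/(\lam_f - 1)$. Provided $|\sigma| > 2C/(\lam_f - 1)$, the right-hand side is a positive constant times $\lam_f^k$, which already forces $\liminf_k \tfrac{1}{k}\log \ell_k \geq \log \lam_f$.

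The main obstacle is the degenerate case where $\gamma_w$ contains no long legal sub-path, making the initial bound vacuous; this is where the non-periodicity of $w$ is essential. I expect to show that the number of illegal turns in $\gamma_k$ is bounded by a constant $N$ independent of $k$ (each cancellation event at a pre-image illegal turn creates only a bounded number of new illegal turns, while legal segments cannot create new illegal turns), so $\gamma_k$ decomposes into at most $N$ legal segments. Since $w$ is non-periodic, $\ell_k \to \infty$, and by pigeonhole some $\gamma_{k_0}$ contains a legal segment of length exceeding $2C/(\lam_f - 1)$; applying the previous argument to $\gamma_{k_0}$ and the map $f^{k-k_0}$ completes the lower bound. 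Combining both directions yields $\lim_k \tfrac{1}{k}\log \ell_k = \log \lam_f$, and since the limit in \eqref{eq2} depends only on $\phi$ and $X$, the value $\lam_f$ is intrinsic to $\phi$.
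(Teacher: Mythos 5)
Your proposal follows essentially the same route as the paper's proof: reduce to the natural metric (where word length is bi-Lipschitz to tightened loop length), get the upper bound for free from $\lam_f$-Lipschitzness, and get the lower bound via bounded cancellation applied to a long legal subsegment, with the threshold length $\sim \mathrm{BCC}/(\lam_f-1)$ and the existence of such a segment eventually forced by non-periodicity together with the boundedness of the number of illegal turns. The only minor difference is one of precision in the degenerate case: the paper uses the sharp fact that for a train-track map the number of illegal turns in $f^k_\#(\alpha)$ is \emph{non-increasing} in $k$, whereas your parenthetical (``each cancellation event creates only a bounded number of new illegal turns'') is weaker as stated and would not by itself give a $k$-independent bound; you should replace it with the monotonicity statement. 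Also, your assertion that $\ell_k \to \infty$ for non-periodic $w$ needs the one-line justification the paper gives (a bounded length forces finitely many conjugacy classes, hence preperiodicity).
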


\begin{proof}
This proof is essentially written down in \cite{Tits0}, but we include it for completeness. Let $w$ be a conjugacy class of a word, it is represented in $G$ by an immersed loop $\al$. Endow $G$ with the natural metric. Consider the infinite sequence $\al_k = f^k_\#(\al)$. Since $f$ is a train-track map, the number of illegal turns in $\al_k$ is non-increasing. We consider the length of $\al_k$. If it is bounded, then $\al$ is preperiodic, since there are only finitely many conjugacy classes smaller than any given length. There is a threshold 
\[\sT = \frac{4 \text{BCC}(f)}{\lam -1}\]
so that if $\delta,\beta, \gamma$ immersed segments, $\beta$ is legal and $len(\beta)>\sT$ then $len(f^k_\#(\delta \cdot \beta \cdot \gamma)) \geq c \lam^k$. The reason is that there is a middle segment $\beta' \subseteq \beta$ that doesn't cancel at all and grows exponentially. If $\al$ is not preperiodic, then there is some $j$ so that $f^j_\#(\al)$ contains a legal segment of length $\sT$. Thus, $len(f^k_\#(\al)) \geq c \lam^k$ for $k \geq j$ (we absorbed $\lam^{-j}len(f^j(w))$ in $c$). The word length with respect to the basis $X$ of $F_n$, is  quasi-isometric to lengths of immersed paths in $G$. Thus, up to a multiplicative error $\displaystyle |\phi^k(w)|_X \asymp \lam^k$ hence the limit on the right of equation (\ref{eq2}) is $\log \lam$.  
\end{proof}

\subsection{Subdivision and un-subdivision of a 2-complex}
\begin{definition}
A \emph{2-complex} $X$ is a tuple $(\calV,\calE,\calC, \glue)$. The set $\calV$ is the set of vertices, $\calE$  is the set of edges, $\calC$ is the set of 2-cells that are polygonal subsets of $\RR^2$. The set $\glue$ is the collection of gluing maps. It is the union of the gluing maps of edges $\glue_\calE =  \{ \glue_e \mid e\in \calE \}$ and the gluing maps of cells $\glue_\calC = \{\glue_c \mid c\in \calC \}$. For every $e \in \calE$, $\glue_e: \partial e \to \calV$  is called a gluing map of $e$. The 1-skeleton of $X$ is the graph 
\[ \displaystyle X^{(1)} = \bigl( \bigcup_{e \in \calE} e \bigr) \cup_{\glue_{\calE}} \calV\] 
For $c \in \calC$, $\glue_c:\partial c \to X^{(1)}$ is  the gluing map of $c \in \calC$. We always assume that the gluing maps are linear on the edges of $c$, therefore the maps can be defined by specifying a directed path in $X^{(1)}$ for every edge of $\partial c$. The total space is 
\[ \displaystyle X =  \bigl( \bigcup_{c \in \calC} c  \bigr) \cup_{\glue} X^{(1)} \]
\end{definition}

\begin{definition}
A side of a 2-cell in $X$ is a pair $s=(c,\alpha)$ where $\alpha$ is an edge of $\partial c$ in $\RR^2$. 
\end{definition}

\begin{definition}
Suppose $e$ is an edge in  $X^{(1)}$ so that $\glue_\calC^{-1}(e)$ is the union of exactly two subintervals $J_1, J_2$ contained the sides $(\al_1,c_1)$ and $(\alpha_2,c_2)$ so that $c_1 \neq c_2$. We say that $e$ is a \emph{removable} edge. 
\end{definition}

\begin{definition}[Unsubdivision]\label{Defn:Unsub} If $X$ is a 2-complex and $e \in \calE$ is a removable edge we can glue the cells whose boundary contains  $J_1,J_2$ and remove $e$ from the set of edges. We will say that the new complex is obtained from $X$ by unsbdividing along $e$. 
\end{definition}

\begin{definition}[Subdivision]
Let $c$ be a 2-cell of $X$ so that $\partial c$ contains more than $3$ vertices. One may subdivide $c$ into two cells without changing the number of vertices by adding a diagonal $e$ between two non-adjacent vertices $v,w$ to the set $\calE$, and replacing $c \in \calC$ with two cells $c_1, c_2$ formed by subdividing $c$ along the diagonal from $v$ to $w$. We say that this complex is obtained from $X$ by subdividing $c$ along $(v,w)$.
%
\end{definition}

\subsection{Mapping tori}
Let $G$ be a graph. The mapping torus of a map $f\from G \to G$ is 
\[ M_f = G \times [0,1]/\sim \]
where the equivalence is generated by the relations $(x,0) \sim (f(x),1)$. We describe the 2-complex structure on $M_f$. 
\begin{enumerate}
	\item The set of vertices $\calV$ equals the set of vertices of $G$.
	\item The set of edges $\calE$ contains  the edges in $G$ which we call horizontal edges, and denote it by $\HR$. The edges in $\calE$ that are not horizontal are edges of the form $(v,f(v))$ for $v \in \calV$. We call these edges vertical edges and denote this set by $\VR$. We orient a vertical edge from $v$ to $f(v)$. 
	\item There is one square $c$ for every $e \in \HR$. We describe the gluing map of $c$. The top edge of $c$ is mapped to $e$, the left side $l$ of $c$ mapped to $\left(i(e), f(i(e)) \right)$ where $i(\cdot)$ denotes the initial vertex of that edge, and the right side $r$ of $c$ is glued to $\left(ter(e), f(ter(e)) \right)$, where $ter(\cdot)$ is the terminal vertex of that edge. The bottom edge edge is glued to the edge path $f(e)$. 
\end{enumerate}

\noindent It is straight-forward to check if a 2-complex structure is that of a mapping torus: 

\begin{proposition}\label{Prop:IdenMT}
The 2-complex $X$ is a mapping torus of a map $f \from G \to G$ iff:
\begin{enumerate}
\item The set of edges $\calE$ of $X$ may be divided into two sets: the set of vertical edges $\VR$ and the set of horizontal edges $\HR$. 
\item\label{Ugraph} The graph on the vertical edges defined by \[ U = \bigl( \bigcup_{e \in \VR} e \bigr) \cup_\glue \calV \] may be endowed with an orientation to make it a union of traps. Equivalently, there is a bijection $a \from \calV \to \VR$. 
\item There is a bijection $\top \from \calC \to \HR$. 
\item We define the horizontal graph by \[ G = \bigl( \bigcup_{e \in \HR} e \bigr) \cup_\glue \calV \]  we choose an orientation on $G$. For $c \in \calC$, the map $\glue_{c}$ is given by the path $\top(c)rw^{-1}l^{-1}$ where $r,l$ are positively oriented edges in $U$ and $w$ is an edge path in $G$. We denote the edge path $w$ by $\bot(c)$.
\end{enumerate} 
\end{proposition}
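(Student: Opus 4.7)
The plan is to prove both directions of the biconditional by explicit construction; the statement is essentially a dictionary between the classical definition of a mapping torus and a structural characterization in terms of 2-complex data, so the proof will be a direct verification on each side.

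For the forward direction ($\Rightarrow$), I would assume $X = M_f$ for some $f \from G \to G$ and read off the four properties from the construction of $M_f$ given just above the proposition. Set $\HR$ to be the edges of $G$ (in the time-$0$ slice), $\VR$ to be the vertical edges $(v,f(v))$ for $v \in \calV$, and use the canonical bijection $\top \from \calC \to \HR$ sending each square to its top edge. Each $v \in \calV$ is the initial vertex of exactly one vertical edge, namely $(v,f(v))$, so setting $a(v)=(v,f(v))$ yields a bijection $a \from \calV \to \VR$ and orients $U$ as a union of traps. The gluing word of each square was exhibited in the construction to be $\top(c) \cdot r \cdot f(e)^{-1} \cdot l^{-1}$, with $l = a(i(e))$, $r = a(\text{ter}(e))$ positively-oriented vertical edges and $w = f(e) = \bot(c)$ an edge path in $G$, which is exactly condition (4).

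For the reverse direction ($\Leftarrow$), given a 2-complex $X$ with the four structural features, my plan is to reconstruct a map $f \from G \to G$ from the data and then exhibit a homeomorphism $M_f \to X$. On vertices, I would define $f(v) = \text{ter}(a(v))$, where $a(v)$ is the unique positively-oriented vertical edge starting at $v$ (unique by the trap condition). On edges, for $e \in \HR$ I take $c = \top^{-1}(e)$ and set $f(e) = \bot(c)$. This is consistent: the gluing word $\top(c) \cdot r \cdot w^{-1} \cdot l^{-1}$ of $c$ forces $l$ to be a positively-oriented vertical edge starting at $i(e)$ and $r$ to be a positively-oriented vertical edge starting at $\text{ter}(e)$; by the trap condition these must equal $a(i(e))$ and $a(\text{ter}(e))$ respectively. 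Tracing the closed-up boundary of the square then shows that $w$ is an edge path from $f(i(e))$ to $f(\text{ter}(e))$ in $G$, so $f$ is a well-defined graph map.

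Finally, I would exhibit a homeomorphism $\Phi \from M_f \to X$ cell-by-cell: send $v \mapsto v$ on vertices, identify $e \times \{0\}$ with $e \in \HR$ via $\top$, send each vertical arc $\{v\} \times [0,1]$ linearly onto $a(v)$, and send each square $e \times [0,1]$ onto the cell $\top^{-1}(e)$ by any homeomorphism realizing the identifications already specified on the boundary. The main obstacle, such as it is, is verifying that the boundary identifications agree on both sides so that these cellwise homeomorphisms assemble into a homeomorphism of total spaces; but this reduces to the observation above that the trap condition uniquely identifies the vertical edges $l$ and $r$ appearing in the gluing word of $c$ with $a(i(e))$ and $a(\text{ter}(e))$, making the gluing data of $X$ tautologically identical to the gluing data of $M_f$ prescribed by the construction.
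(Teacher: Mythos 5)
Your proposal is correct and follows essentially the same approach as the paper: in both, the reverse direction is handled by defining $f$ on vertices via $f(v) = \mathrm{ter}(a(v))$ and on edges via $f(e) = \bot(\top^{-1}(e))$, then using the fact that the boundary word $\top(c)\,r\,w^{-1}\,l^{-1}$ is a closed connected path (together with the uniqueness of outgoing vertical edges from the trap condition) to verify well-definedness. You spell out the forward direction and the cellwise homeomorphism a bit more explicitly than the paper does, but the underlying argument is the same.
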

\begin{proof}
If $X$ satisfies items (1)-(4) we define the map $f \from G \to G$ as the linear map taking each vertex $v \in G$ to the terminal endpoint of $a(v)$ defined in item (2), and for $e$ an edge in $G$ we let $f(e) = \bot(c)$ where $c$ is the cell so that $\top(c) = e$. We check that $f$ is well defined: since $erw^{-1}l^{-1}$ is a connected path, $ter(r) = ter(w)$ and $ter(l) = ini(w)$, hence $f(ter(e))=ter(f(e))$ and $f(i(e))=i(f(e))$. The gluing maps given in (2),(4) are exactly those induced by $f$. 
\end{proof}

The fundamental group of the mapping torus $M_f$ is an HNN extension of $F_n$. Let $\phi = f_*$ then 
\[ \Gamma_f = F_n *_\phi = \langle x_1, \dots, x_n,t \mid tx_it^{-1} = \phi(x_i) \rangle \]
The universal cover $\widetilde{M}_f$ of $M_f$ is contractible. This follows from the fact that point inverses of the map from $\widetilde{M}_f$ to its Bass-Serre tree are contractible. 


\section{Surgery of mapping tori}

\begin{definition}
Let $f \from G \to G$ be a map, we call and edge
$e$  \emph{mixed} if its image is a concatenation of more than one edge. We call an edge $e$ \emph{dynamic} if it is contained in the image of a mixed edge, or if there is more than one edge that maps onto it. 
\end{definition}

%

\begin{definition}\label{Def:Qgraph}
We define an equivalence relation on the edges of $G$. The relation is generated by $e \sim f(e)$ if $f(e)$ is a single edge that is non-dynamic. An equivalence class of edges will be called a \emph{stack}. Similarly we define an equivalence relation between vertices of $G$, generated by: $v \sim w$ if $f(v)=w$. 
\end{definition}

Each stack has the form $\eps = \{ e, f(e), f^2(e), \dots , f^s(e) \}$ for $s \geq 0$ where $e$ is a dynamic edge and $f^s(e)$ is possibly a mixed edge. We call $f^s(e)$ \emph{the bottom edge} of the stack $\eps$. Note that $e$ is the only dynamic edge in $\eps$ and $f^s(e)$ is the only mixed edge in $\eps$ (if it is indeed mixed). The quotient of $G$ under these equivalences is a graph denoted $Q$, and the quotient map will be denoted $p \from G \to Q$. 

\begin{definition} \emph{The archetype} of $f$ is a map $f_Q: Q \to Q$ defined by as follows.  $f_Q$ fixes every vertex of $Q$, and $f_Q(\eps) = p(f(e))$ where $e$ is the bottom edge of $\eps$.
\end{definition}

\begin{remark}
The archetype of a train-track map of a fully irreducible automorphism need not be a train-track map, need not be irreducible and might not even be a homotopy equivalence. The most that can be said is that $f_Q:Q \to Q$ is a composition of a sequence of Stallings folds and a pinch map - a map that is a homeomorphism on the graph minus its vertices.   
\end{remark}

We define \emph{a surgery of a mapping torus}. Let $X$ be a mapping torus of the map $f \from G \to G$. Let $U$ the sub-graph of $X^{(1)}$ containing the vertical edges, as in Proposition \ref{Prop:IdenMT}(\ref{Ugraph}). A connected component of $U$ is an oriented graph on an equivalence class of vertices as in Definition \ref{Def:Qgraph}. \\

We want to remove $U$ from $X$, glue in a different graph $S$, and redivide the 2-cells so the output is again a mapping torus. We first unsubdivide the cells of $X$. For each non-dynamic edge, $e$ of $G$ there are exactly two 2-cells $c_e, c_{f^{-1}(e)}$ that are attached to $e$. Therefore $e$ is a removable edge and we may unsubdivide along $e$ as in definition \ref{Defn:Unsub}. By unsubdividing $X$ at along all the non-dynamic edges in a single stack $\eps$, one obtains a single 2-cell $R_\eps$ called the rectangle corresponding to $\eps$. The attaching map $\glue_{R_\eps}$ sends the top edge of $R_\eps$ to the dynamic edge $e$ in $\eps$, the bottom edge of $\eps$ to $f(e')$ for $e'$ the bottom edge of $\eps$. The right side of $R_\eps$ is sent to an edge path initiating at $ter(e)$ and terminating at $ter(f(e'))$ whose length is $|\eps|$, and the left side of $R_\eps$ is mapped similarly. \\

\begin{definition}
The 2-complex obtained from $X$ by unsubdividing at all the non-dynamic edges is called the \emph{the floor-plan} of $X$ and denoted $\xfp$.
\end{definition}

Let $K$ be the boundary of a small neighborhood of $U$ in $\xfp$. The vertices of $K$ correspond to equivalence classes of oriented dynamic edges in $G$. The equivalence is generated by the relations $e \sim E'$ if there is some mixed edge $e''$ such that $f(e'')$ contains $e'e$ as a subword. For each stack $\eps$ of $G$ there is an oriented edge in $K$, denoted $b_\eps$ from the dynamic edge $e$ to $Df(e')$ where $e'$ is the bottom edge of $\eps$. By unraveling the definitions we see that $K$ is a quotient of $\Delta_{f_Q}$ under the equivalence relation just described. 

\begin{definition}
The 2-complex $X_U$ obtained by \emph{cutting $X$ along $U$}, is the result of gluing $K$ to $\xfp -U$. Explicitly, we consider $\xfp - U$ and add the sides $s_\eps, s_{\bar\eps}$ to the rectangle-with-missing-sides $R_\eps$. The assignment $\chi(s_\eps) = b_\eps$ defines a map $\chi: X_U \to K$ so that $X_U = X_U \cup_\chi K$. 
\end{definition}

The glueing map $\gfp$ suppressed in the structure of $\xfp$ determines a map $\rho: K \to U$, so that $\xfp = X_U \cup_\rho U$. This implies the following proposition:


\begin{proposition}\label{Prop:SameFP}
Let $X$ be the mapping torus of $f$ and $Y$ the mapping torus of $f_Q$. Let $U$ be the graph of vertical edges in $X$,  $W$ the graph of the vertical edges in $Y$ then
\[ X_U = Y_W \]
\end{proposition}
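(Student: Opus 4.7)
The plan is to identify $X_U$ and $Y_W$ piece by piece: both should turn out to be obtained by gluing a common family of rectangles, one per stack of $f$, to a common cut graph via the same attaching data. The only difference between $X$ and $Y$ lies in their vertical graphs $U$ and $W$, and those are precisely what gets excised by the cutting operation.

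The critical preliminary step will be to show that every edge of $Q$ is dynamic with respect to $f_Q$, which forces the floor-plan $\accentset{\circ}{Y}$ to coincide with $Y$ so that no unsubdivision is needed on the $Y$ side. Fixing an edge $\eps$ of $Q$ corresponding to a stack of $f$ with top dynamic edge $e$, I would split into the two cases from the definition of dynamic. If $e$ lies in $f(e'')$ for some mixed edge $e''$ of $G$, I would first note that any mixed edge is forced to be the bottom of its own stack $\eps''$, since mixedness is incompatible with the relation $e'' \sim f(e'')$; then because distinct dynamic edges of $G$ descend to distinct edges of $Q$, the path $f_Q(\eps'') = p(f(e''))$ has length $\ge 2$ in $Q$ and contains $\eps$, placing $\eps$ in the image of a mixed edge of $f_Q$. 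If instead two distinct edges $g_1 \neq g_2$ of $G$ satisfy $f(g_1) = f(g_2) = e$ as single edges, then each $g_i$ must be the bottom of its stack $\eps_i$ (because $f(g_i) = e$ is dynamic), the two stacks are distinct since each stack has a unique bottom, and both of them map to $\eps$ under $f_Q$. Either way, $\eps$ is dynamic in $f_Q$.

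Once $\accentset{\circ}{Y} = Y$ is in hand, the rectangles of the two floor-plans will be in canonical bijection, both indexed by stacks of $f$. For the stack $\eps$ with top $e$ and bottom $e'$, the rectangle $R_\eps^X \subset \accentset{\circ}{X}$ has top $e$ and bottom edge-path $f(e')$, while the square $R_\eps^Y \subset Y$ has top $\eps$ and bottom $f_Q(\eps) = p(f(e'))$; the bijection $p$ between dynamic edges of $G$ and edges of $Q$ then matches tops and bottoms edge-by-edge. Next I would verify that the two cut graphs coincide: the vertex relation on the $Y$ side uses $f_Q$-mixed edges of $Q$, which by the analysis above correspond precisely to stacks whose bottom is mixed in $f$, so it pulls back under $p$ to the relation defining $K$; and the edge $b_\eps$ of the $Y$-side cut graph runs from $\eps$ to $Df_Q(\eps) = p(Df(e'))$, which under $p$ is the same as the edge $b_\eps$ of $K$ on the $X$ side.

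The cutting operation then replaces the vertical sides of each rectangle (paths of length $|\eps|$ in $U$ on the $X$ side, single loops in $W$ on the $Y$ side) by the pair $s_\eps, s_{\bar\eps}$ attached to the common edges $b_\eps, b_{\bar\eps}$. Since the rectangles, their top and bottom attachments, the cut graph, and the attaching map $\chi$ all agree, we obtain $X_U = Y_W$. The main obstacle should be the first step, verifying that dynamicness transfers intact from $f$ to $f_Q$ using the fact that mixed edges are bottoms of stacks; the remainder of the argument is bookkeeping organized by the projection $p$.
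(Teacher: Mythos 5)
Your argument is correct, and it supplies substantially more detail than the paper, which asserts Proposition~\ref{Prop:SameFP} with no proof beyond the preceding remark that $\xfp = X_U \cup_\rho U$ and the observation that $K$ is a quotient of $\Delta_{f_Q}$. The key auxiliary fact you isolate --- that every edge of $Q$ is dynamic with respect to $f_Q$, so that $\accentset{\circ}{Y} = Y$ and no unsubdivision occurs on the $Y$ side --- is exactly the point the paper leaves implicit, and your two-case verification (mixed edges are bottoms of their stacks because $e'' \sim f(e'')$ would force $f(e'')$ to be a single edge; distinct preimages $g_1,g_2$ of a dynamic $e$ are bottoms of distinct stacks) is sound. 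The remainder, matching rectangles to stacks via $p$, identifying tops and bottoms, and pulling back the cut-graph equivalence relation, is the bookkeeping the paper compresses into ``this implies.'' One small stylistic alternative the paper gestures at: rather than comparing cut graphs edge-by-edge via $p$, one can note that both cut graphs are the same explicit quotient of $\Delta_{f_Q}$, which makes that piece of the verification immediate; your $p$-matching accomplishes the same thing. In short, this is the same route the paper intends, carried out in full.
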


We now wish to glue $X_U$ to a different graph $S$ via $\psi$. Let $|\calV_Q|$ be the number of vertices in $Q$. This number also equals the number of vertex orbits in $G$, and is the number of connected components of $U$.

\begin{definition}\label{Def:admissible}
An oriented graph $S$ is called \emph{admissible} if it is a union of $I = |\calV_Q|$ disjoint traps. A map $\psi: K \to S$ is a \emph{filling} of $(X,U)$ if:
\begin{enumerate}
\item $\psi$ takes vertices to vertices, and maps edges to edge paths. 
\item There is a bijection between $U_1, \dots , U_I$, the connected components of $U$,  and $S_1, \dots , S_I$, the connected components of $S$,  so that   \[ \rho(k) \in U_i \Leftrightarrow \psi(k) \in S_i \text{ for all } k \in K \] 
\item If $b,b'$ are veritcal edges in the same rectangle $R_\eps$ in $X_U$, then \[ len(\psi(b)) = len(\psi(b'))\] We denote this length by $ht_{\psi}(R_\eps)$
\end{enumerate}
\end{definition}

\noindent We glue $X_U$ to $S$ along $\psi:K \to S$. 

\begin{definition}\label{Def:Filling}
Let $X$ be a mapping torus of a map $f \from G \to G$, and let $U$ be the graph on the vertical edges.  Let $S$ be an admissible oriented graph and $\psi$ a filling of $(X,U)$. The complex obtained from $X$ by $\psi$-filling is denoted $X(U,S,\psi)$ and it is constructed from $X_U \cup_\psi S$ by adding $ht_\psi(R_\eps)-1$ parallel edges between $b_\eps$ and $b_{\bar\eps}$ in $R_\eps$ at preimages of vertices of $S$.
\end{definition}

\begin{proposition}\label{Prop:SurgeryInv}
Mapping torus surgery  is invertible, i.e.
\begin{enumerate}
\item $X(U,U,\rho) = X$ 
\item If $Z = X(U,S,\psi)$ then $X = Z(S,U,\rho)$.
\end{enumerate}
\end{proposition}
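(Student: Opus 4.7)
The plan is to verify both assertions by carefully unwinding the constructions.

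For part~(1), recall from the discussion preceding \propref{SameFP} that $X_U \cup_\rho U = \xfp$. Hence $X(U,U,\rho)$ is obtained from $\xfp$ by inserting, in each rectangle $R_\eps$, a total of $ht_\rho(R_\eps) - 1$ parallel edges at the preimages of the interior vertices of $\rho(b_\eps)$. Because $\rho(b_\eps)$ traces the vertical orbit path in $U$ whose length is $|\eps|$, one has $ht_\rho(R_\eps) = |\eps|$, and the inserted horizontal edges are exactly the non-dynamic edges $f(e), f^2(e), \ldots, f^s(e)$ of the stack that were removed when $\xfp$ was formed from $X$. Re-inserting them restores the original cell decomposition, so $X(U,U,\rho) = X$.

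For part~(2), let $Z = X(U,S,\psi)$ and let $f' \from G' \to G'$ be the induced horizontal map, where $G'$ consists of the dynamic edges of $G$ together with the $h_\eps - 1 := ht_\psi(R_\eps) - 1$ parallel edges introduced in each $R_\eps$. Using \propref{IdenMT}, I first verify that $Z$ is a mapping torus with vertical graph $S$. Each stack $\eps$ of $X$ now corresponds to a \emph{stretched} stack in $Z$ of length $h_\eps$: the top dynamic edge $e$ is unchanged, the inserted parallel edges are non-dynamic in $Z$, and whenever $e'$ was mixed in $X$, the unique mixed edge of the stretched stack sits at its bottom and has image $f(e')$ identical to that of $e'$ in $X$.

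It follows that the dynamic edges of $Z$ are in natural bijection with those of $X$, and that the equivalence relation on oriented dynamic edges generated by mixed-edge images is the same in $Z$ as in $X$. Therefore the boundary graph $K'$ of a small neighborhood of $S$ in $\accentset{\circ}{Z}$ equals $K$: the vertices of $K'$ correspond to the same equivalence classes, and its edges $b_{\eps_Z}$ are indexed by the same set of stacks and run between the same endpoints $e$ and $Df(e')$. Unsubdividing the parallel edges of $Z$ collapses each stretched stack back to the single rectangle $R_\eps$, yielding $\accentset{\circ}{Z} = X_U \cup_\psi S$ and hence $Z_S = X_U$. Applying the filling by $\rho$ produces $Z_S \cup_\rho U = X_U \cup_\rho U = \xfp$, and re-inserting $|\eps| - 1$ parallel edges recovers $X$ exactly as in part~(1).

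The principal obstacle is the verification in part~(2) that $K' = K$. This requires showing that the data entering the definition of $K$ — namely the set of dynamic edges and the images of mixed edges under the horizontal map — are intrinsic to the pair $(X,U)$ and unchanged by the surgery, rather than depending on the stack heights set by $\psi$. Everything else is a bookkeeping exercise carried out directly from \defref{Filling}, \defref{admissible}, and the preceding constructions of $\xfp$ and $X_U$.
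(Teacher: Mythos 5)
Your argument is correct and follows essentially the same route as the paper: part~(1) is proved directly from $\xfp = X_U \cup_\rho U$ and the observation that re-subdividing by $\rho$ restores the original cell decomposition, while part~(2) reduces to (1) via the key identity $X_U = \bigl(X(U,S,\psi)\bigr)_S$. The only difference is that the paper simply asserts this identity, whereas you supply the justification (dynamic edges, stack structure, and the boundary graph $K$ depend only on $(X,U)$ and not on the heights imposed by $\psi$, so $K' = K$); this is a welcome expansion of what the paper treats as evident.
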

\begin{proof}
To prove (1) recall that $\xfp = X_U \cup_\rho U$. Since $X$ is obtained from $\xfp$ by subdividing each $c_e$ into $ht(c_e) = |\eps|$ cells we have $X = X(U,U,\rho)$. Item (2) follows from (1) and the fact that $X_U = \bigl( X(U,S,\psi) \bigr)_S$. 
\end{proof}

\begin{proposition}\label{Prop:NewMapTorus}
If $X,U,S,\psi$ are as above then $X(U,S,\psi)$ is a mapping torus. 
\end{proposition}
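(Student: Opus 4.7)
The plan is to verify the four conditions of Proposition \ref{Prop:IdenMT} for $Z := X(U, S, \psi)$. Unpacking the construction, $Z$ is obtained from $\xfp$ by deleting $U$, gluing $S$ in its place along $\psi : K \to S$, and then subdividing each rectangle $R_\eps$ into $ht_\psi(R_\eps)$ sub-cells by inserting $ht_\psi(R_\eps)-1$ parallel edges at preimages of vertices of $S$. I declare the vertex set of $Z$ to be that of $S$, the vertical edges $\VR'$ of $Z$ to be the edges of $S$, and the horizontal edges $\HR'$ to be the dynamic edges of $G$ (the horizontal edges surviving in $\xfp-U$) together with all added parallel edges. This establishes condition (1).

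For condition (2), the subgraph of $Z$ spanned by $\VR'$ is exactly $S$, which by Definition \ref{Def:admissible} is a disjoint union of oriented traps; since every vertex of a trap is the initial endpoint of a unique oriented edge, the required bijection $a : \calV' \to \VR'$ exists. For condition (3), I define $\top' : \calC' \to \HR'$ by sending the topmost sub-cell of $R_\eps$ to the dynamic edge $e$ at the top of $R_\eps$, and each lower sub-cell to the parallel edge immediately above it. Every dynamic edge of $G$ is the top of a unique rectangle and every parallel edge caps a unique sub-cell, so $\top'$ is a bijection.

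The substance lies in condition (4): each sub-cell $c$ of $R_\eps$ should have attaching map $\top'(c) \cdot r \cdot \bot'(c)^{-1} \cdot l^{-1}$ with $r, l$ single positively oriented edges of $\VR'$ and $\bot'(c)$ an edge path in $\HR'$. The length-matching axiom of Definition \ref{Def:admissible}(3) forces $\psi(b_\eps)$ and $\psi(b_{\bar\eps})$ to have equal length $ht_\psi(R_\eps)$, so the added parallel edges slice them into matching sequences of single edges of $S$; these serve as the right and left sides $r, l$ of the sub-cells. The bottom $\bot'(c)$ is either a single parallel edge (for sub-cells above the bottommost) or the edge path $f(f^s(e))$ running along the bottom of $R_\eps$ (for the bottommost sub-cell); in both cases $\bot'(c)$ is an edge path in $\HR'$, using that the image of a mixed edge consists entirely of dynamic edges. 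The main obstacle I foresee is checking that each $r, l$ is \emph{positively} oriented with respect to the trap structure on $S$, which reduces to observing that the oriented edges of $K$ (which record the direction of $f$-iteration in $X$) map under $\psi$ into the positively oriented trap structure of $S$ in a manner consistent with the component bijection in Definition \ref{Def:admissible}(2).
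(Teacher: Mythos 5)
Your proof takes the same route as the paper: verify the four conditions of Proposition~\ref{Prop:IdenMT}, using that $S$ is a union of traps for condition (2), tracing bijections through the unsubdivision/gluing/resubdivision stages for condition (3), and checking the attaching-map form for condition (4). The paper's proof is terser (it simply asserts condition (4) holds), while you spell out how the length-matching axiom in Definition~\ref{Def:admissible}(3) supplies the single-edge sides $r,l$ and how the dynamic-edge bottom path arises; your added detail is correct and fills a gap the paper leaves implicit.
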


\begin{proof}
We need to check properties 1-4 in Proposition \ref{Prop:IdenMT}. 
The vertical edges of $X(U,S,\psi)$ are the edges of $S$, their complement will be the horizontal edges.  By assumption, $S$ is a union of traps so it satisfies (2) in \ref{Prop:IdenMT}.  To check (3), notice that there is a bijection from the set of cells of $X$ to the horizontal edges. After removing the non-dynamic edges, there is a bijection from the set of rectangles $\mathcal{R}$ to the edges of $\xfp$ not in $U$. This induces a bijection from the rectangles in $X_U$ to the edges outside $K$, which survives after gluing $K$ along $S$. So there is a bijection between the 2-cells of $X_U \cup_\psi S$ and the edges outside of $S$. Subdivision adds a new cell and a new top edge for it. So at the end of the subdivition process, there is a bijection $\top \from \calC \to \HR$. Property (4) of \ref{Prop:IdenMT} holds. This verifies that $X(U,S,\psi)$ is a mapping torus.
\end{proof}

\begin{proposition}\label{Prop:fQsurgery}
Given $X$ a mapping torus of $f \from G \to G$ and $U$ the graph on the vertical edges, let $S$ be a union of $|\calV_Q|$ disjoint 1-edge circles. Let $\psi$ be a filling where the height of every rectangle is 1. Then $X(U,S,\psi)$ is the mapping torus of $f_Q$.
\end{proposition}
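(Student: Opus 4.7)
The plan is to identify $X(U,S,\psi)$ with $Y$, the mapping torus of $f_Q$, by fitting both into the common cut-and-fill framework developed above. Since $f_Q$ fixes every vertex of $Q$, each vertical edge of $Y$ is a loop at its basepoint, so the graph $W$ of vertical edges of $Y$ is itself a disjoint union of exactly $|\calV_Q|$ one-edge circles. The natural component bijection $S \leftrightarrow W$ (both indexed by $\calV_Q$, via the bijection supplied by \defref{admissible}(2)) identifies $S$ and $W$ as oriented graphs, once the orientation on each one-edge circle of $S$ is chosen to agree with that of the corresponding loop in $W$.

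By \propref{SameFP}, $X_U = Y_W$, so the complex $K$ sits inside both as the same subspace. Because $\psi$ assigns height $1$ to every rectangle, the filling procedure of \defref{Filling} adds no parallel edges, so
\[ X(U,S,\psi) \;=\; X_U \cup_\psi S \;=\; Y_W \cup_\psi S. \]
On the other hand, \propref{SurgeryInv}(1) applied to $Y$ gives $Y = Y_W \cup_{\rho_W} W$, where $\rho_W \from K \to W$ is the attaching map encoded in the $2$-complex structure of $Y$. It therefore suffices to check that, under the identification $S \cong W$, the filling map $\psi$ coincides with $\rho_W$.

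On vertices, both $\psi$ and $\rho_W$ send a vertex of $K$ (an equivalence class of oriented dynamic edges) to the unique vertex of the component of $S$ (resp.\ $W$) indexed by the orbit of the associated endpoint in $G$; condition~(2) of \defref{admissible} forces $\psi$ to make exactly this assignment, and the vertex-fixing property of $f_Q$ forces the same for $\rho_W$. On edges, since each rectangle has height one and each component of $S$ contains a single oriented loop, $\psi(b_\eps)$ is forced to be that loop, and likewise $\rho_W(b_\eps)$ is the corresponding loop in $W$. The main obstacle is a bookkeeping verification that the orientations match up; this reduces to the observation that, once the bijection $S \leftrightarrow W$ is fixed on components, the one-edge circles carry a unique orientation compatible with the trap structure of \propref{IdenMT}(\ref{Ugraph}), and any admissible $\psi$ can be taken to respect this choice. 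Assembling these identifications produces $X(U,S,\psi) = Y$, as required.
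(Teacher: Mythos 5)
Your proof is correct and uses essentially the same idea as the paper, which simply cites \propref{SameFP} and leaves all details to the reader; you have filled in those details. One small point that deserves to be made explicit: the step ``\propref{SurgeryInv}(1) applied to $Y$ gives $Y = Y_W \cup_{\rho_W} W$'' actually yields $Y = Y(W,W,\rho_W)$, which equals $Y_W \cup_{\rho_W} W$ only after one also checks that every rectangle of $Y$ has height $1$ (equivalently, that $Y$ coincides with its own floor plan). This in turn follows from the observation that every edge of $Q$ is dynamic for $f_Q$: if $\eps \in \calE_Q$ corresponds to a stack with dynamic top edge $e$, then either $e$ lies in the image of a mixed edge $e'$ of $G$, in which case $p(e')$ is a mixed edge of $Q$ whose image under $f_Q$ contains $\eps$, or two distinct edges of $G$ map onto $e$, in which case their stacks are distinct (each is the bottom of its stack) and both map onto $\eps$ under $f_Q$. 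Either way $\eps$ is dynamic, so $f_Q$ has only trivial stacks, no unsubdivision occurs in passing to $\mathring{Y}$, and $Y = Y_W \cup_{\rho_W} W$ as you use. With this inserted, your identification of $\psi$ with $\rho_W$ on vertices and on (length-one) edges completes the argument.
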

\begin{proof}
This follows from Proposition \ref{Prop:SameFP}.
\end{proof}

\begin{example} 
Let us start with the map in Figure \ref{Fig:graph1}. 
\begin{figure}[ht]
\begin{center}
\input{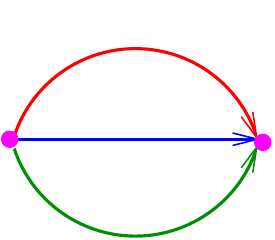_t}
\caption{We construct the mapping\label{Fig:graph1} torus of the following map $f(x) = y$, $f(y)= z$ and $f(z) = yXz$.}
\end{center}
\end{figure}

This map is not a train-track map. It is its own archetype since every edge is dynamic. The structure of the mapping torus is given in Figure \ref{Fig:mtNew}. 
\begin{figure}[ht]
\begin{center}
\input{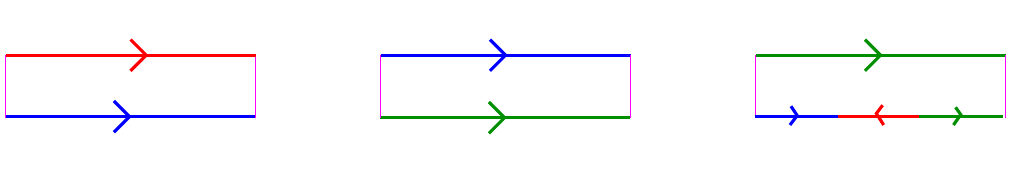_t}
\caption{The mapping torus of $f$.}
\label{Fig:mtNew}
\end{center}
\end{figure}
The left pink edges are identified to the vertical circle $S_1$ and the right pink edges are identified to the vertical circle $S_2$. The vertices of $K$ correspond to the directed horizontal edges $x,y,z,X,Y,Z$ with the equivalence $Y\sim X$ and $x \sim z$. $K$ has edges $b_x = ([x],[y]), b_y = ([y],[z]), b_z = ([z],[y])$ that form the connected component $K_1$ and $b_X = ([X],[Y]), b_Y =  ([Y],[Z]), b_Z = ([Z],[Z])$ that form the connected component $K_2$. Figure \ref{Fig:KnS} shows the map $\rho: K \to S$. 
\begin{figure}[hb]
\begin{center}
\input{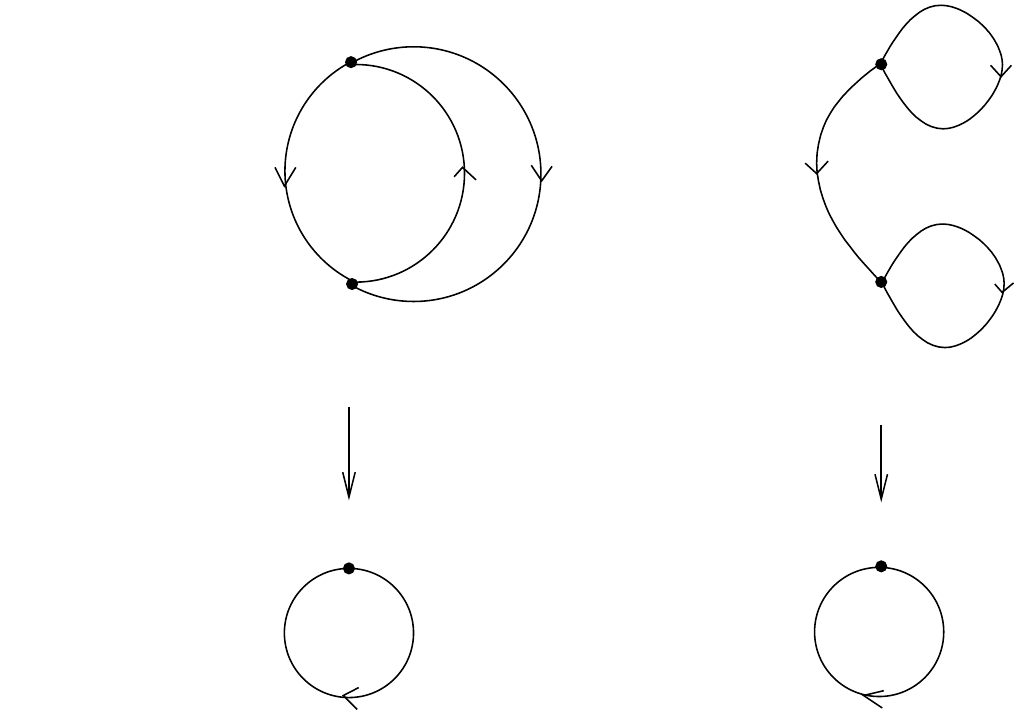_t}
\caption{The map $\rho \from K \to S$.}
\label{Fig:KnS}
\end{center}
\end{figure}

\begin{figure}[ht]
\begin{center}
\input{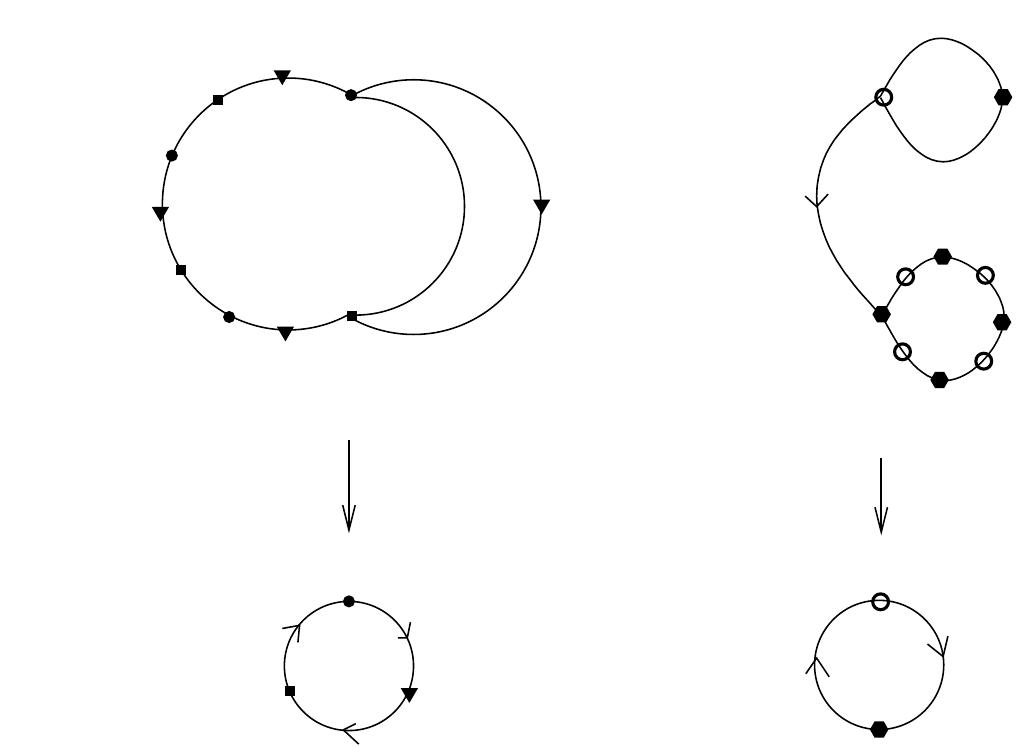_t}
\caption{The map $\rho' \from K \to U$.}
\label{Fig:KnU}
\end{center}
\end{figure}

Changing $\rho$ to a map $\rho':K \to U$ as in Figure \ref{Fig:KnU}, and doing surgery gives a new mapping torus whose structure is described in Figure \ref{Fig:mt2}. The heights of the rectangles are $ht_\psi(R_x)=2, ht_\psi(R_y) = 1, ht_\psi(R_z) = 8$. The new map is $f'\from G' \to G'$ with $G'$ as in Figure \ref{Fig:graph2}. We have the map 
\[
x \to x_1 \to y \to z \to z_1 \to z_2 \to z_3 
\to z_4 \to z_5 \to  z_6 \to z_7 \to y_1X_1z_1.
\] 
Notice that $f'$ is an irreducible train-track map and its archetype is $f$.

\begin{figure}[ht]
\begin{center}\input{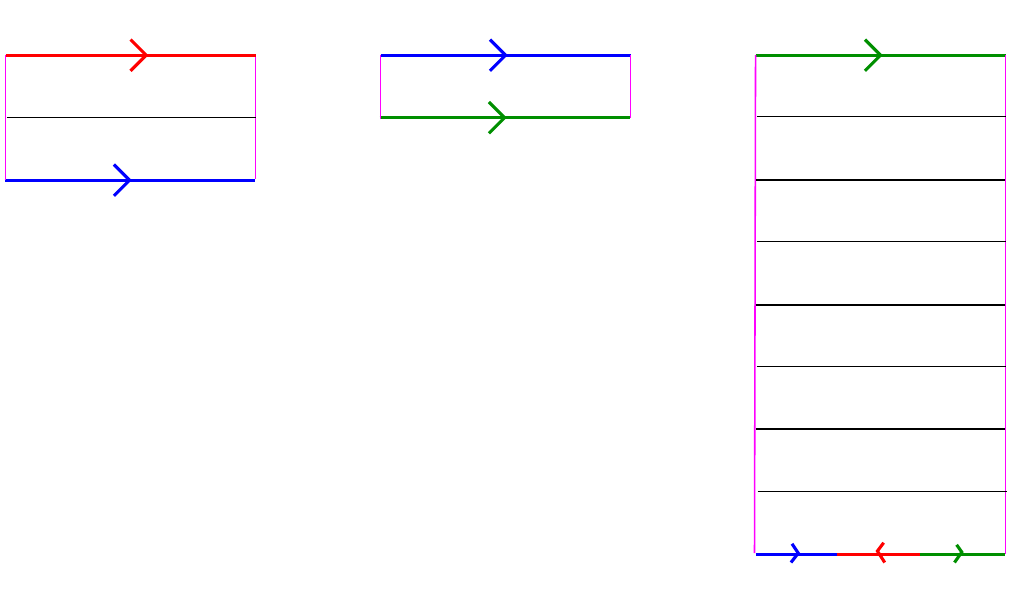_t}
\caption{The new cell structure after surgery. Cells are subdivided according to their height. }
\label{Fig:mt2}
\end{center}
\end{figure}

\begin{figure}[ht]
\begin{center}\input{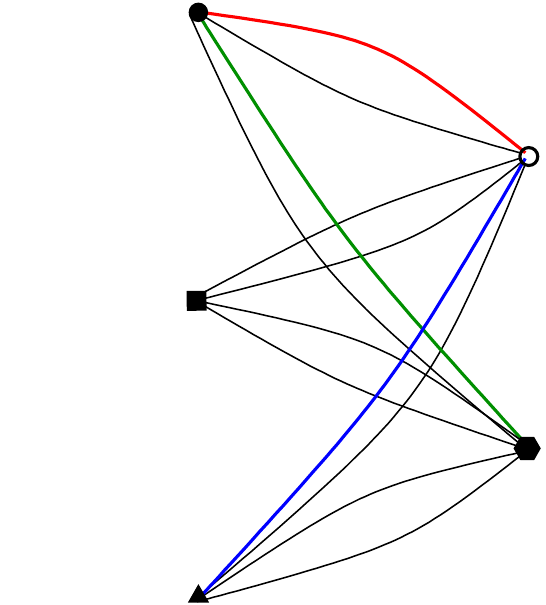_t}
\caption{The complex obtained after surgery is the mapping torus of a map $f':G' \to G'$, with $G'$ as above. }
\label{Fig:graph2}
\end{center}
\end{figure}

\end{example}

\section{Finiteness} \label{Sec:finite}

We start by proving that if $f$ is $\sP$--small then the quotient graph $Q$ and the archetype of $f$ are uniformly finite. 

\begin{definition}
Let $\sB>0$ a map is $\sB$-bounded if the image of every edge is a concatenation of at most $\sB$ edges.
\end{definition}

\begin{proposition}\label{Prop:fQfinite1}
For every ${\sf P}$ there are positive integers ${\sf E}, \sB$ so that if $f$ is ${\sf P}$--small then  
\begin{enumerate}
\item $Q$ has at most $\sE$ edges and at most $\sE$ vertices. 
\item $f_Q$ is $\sB$--bounded.
\end{enumerate}
\end{proposition}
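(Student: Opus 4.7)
My approach combines the Perron--Frobenius structure of the transition matrix $M$ of $f$ with the small-dilatation hypothesis $\lam^n \leq \sP$.

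I start by normalizing the natural metric so that $L := \sum_i v_i = 1$, and take the right Perron--Frobenius eigenvector $\vec u$ with $\sum_i u_i = 1$. Summing the eigenvector equations across all edges produces two dual identities: writing $r_i := \sum_j a_{ij}$ for the row sum of $M$ (the number of times $e_i$ is covered in the combined image),
\[
\lam - 1 \;=\; \sum_{i \in D_{\geq 2}} v_i\,(r_i-1)
\qquad\text{and}\qquad
\lam - 1 \;=\; \sum_{j \text{ mixed}} u_j\,(|f(e_j)|-1),
\]
where $D_{\geq 2} \subseteq D$ denotes the multiply-covered dynamic edges; non-dynamic edges satisfy $r_i = 1$ and drop out. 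Inside a stack $\{e_0,\dots,e_s\}$, the left and right eigenvector equations force $v_{e_k} = \lam^k v_{e_0}$ and $u_{e_k} = \lam^{-k} u_{e_0}$, so the product $u_{e_k} v_{e_k}$ is a stack invariant.

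To bound $|E(Q)| = |D|$, I partition $D = D_{=1} \sqcup D_{\geq 2}$. The first identity yields $|D_{\geq 2}| \leq (\lam-1)/v_{\min}^{D_{\geq 2}}$; the second yields $\mu \leq (\lam-1)/u_{\min}^{\text{mixed}}$ for the number $\mu$ of mixed edges. Every edge of $D_{=1}$ is in the image of a unique mixed edge, so $|D_{=1}| \leq \sum_{\text{mixed}} |f(e)| = \mu + \sum_{D_{\geq 2}}(r_i - 1)$, and the first identity bounds the right-hand side. The vertex bound follows from $|V(Q)| \leq 2|E(Q)|$, because each vertex of $Q$ is an endpoint of some edge of $Q$. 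For the $\sB$-boundedness, the dual identity gives $|f_Q(\eps)| - 1 = |f(\text{bot}_\eps)| - 1 \leq (\lam-1)/u_{\text{bot}_\eps}$ when $\text{bot}_\eps$ is mixed, and $|f_Q(\eps)| = 1$ otherwise.

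The main obstacle is to uniformly lower bound $v_{\min}^{D_{\geq 2}}$, $u_{\min}^{\text{mixed}}$, and $u_{\text{bot}_\eps}$ in terms of $\sP$ alone. Since mixed edges sit at the bottoms of stacks (their $u$-value decays as $\lam^{-(\bar s -1)}$ relative to the top) while dynamic edges sit at the tops (with $v$ decaying as $\lam^{-(\bar s -1)}$ relative to the bottom), long stacks make these minima small. The key leverage is the stack invariant $u_i v_i = u_{e_0} v_{e_0}$, combined with the global rank identity $n = \sum_\eps \bar s_\eps - |V(G)| + 1$ and the estimate $\lam - 1 \leq \sP^{1/n} - 1 = O((\log\sP)/n)$: long stacks consume rank (each of length $\bar s_\eps$ costs $\bar s_\eps$ edges), and the Perron--Frobenius inequalities prevent too many long stacks coexisting without driving $\lam^n$ above $\sP$. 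Chaining these estimates produces constants $\sE, \sB$ depending only on $\sP$.
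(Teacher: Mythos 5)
Your eigenvector identities are correct and worth checking: with $\sum_i v_i = \sum_i u_i = 1$, summing $v^T M = \lam v^T$ and $Mu = \lam u$ over coordinates does give
\[
\lam - 1 = \sum_i v_i\,(r_i - 1) \qquad\text{and}\qquad \lam - 1 = \sum_j u_j\,\bigl(|f(e_j)|-1\bigr),
\]
and since a Perron--Frobenius matrix has no zero row or column, non-dynamic edges satisfy $r_i = 1$ exactly and drop out, as you claim. The stack computations $v_{f^k(e)} = \lam^k v_e$ and $u_{f^k(e)} = \lam^{-k}u_e$, and the invariance of $u_e v_e$ along a stack, are also right.

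But the proof is not complete, and the gap is exactly where you flag it. You reduce everything to a lower bound of the form $v_{\min}^{D_{\geq 2}},\, u_{\min}^{\text{mixed}} \gtrsim_{\sP} \lam - 1$, and then write that ``chaining these estimates produces constants $\sE,\sB$.'' That last step is not a routine chaining; it is the whole content of the proposition. Concretely, the natural attempt is circular: normalizing $\sum v_i = 1$, a stack of length $s$ starting at the dynamic edge $e_0$ has $v$-mass $v_{e_0}\tfrac{\lam^{s+1}-1}{\lam-1}$, and to force $v_{e_0}\gtrsim (\lam-1)/c(\sP)$ you would want each stack to carry $v$-mass at least $1/(2k)$ where $k$ is the number of stacks --- but $k$ is precisely the quantity you are trying to bound, and nothing you have written prevents a few stacks from hogging almost all the mass while others starve, driving $v_{e_0}$ for the starved stacks far below $1/k$. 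The rank identity and the stack invariant $u_e v_e$ do not by themselves break this; you would need a genuine new inequality tying eigenvector mass per stack to the PF eigenvalue in a way that is uniform in $k$.

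The paper avoids all of this by invoking a theorem of Ham and Song, which gives directly
\[
\textstyle 1 + \sum_v(\dout(v)-1) = 1 + \sum_v(\din(v)-1) \leq \lam_f^{\,n},
\]
so $\sP$-smallness immediately bounds the number of mixed edges and their image lengths by $\sP$, hence the number of dynamic edges by $\sP^2 + \sP$, with no eigenvector analysis at all. Your route is in effect an attempt to reprove a weighted version of Ham--Song from the PF eigenvector equations; if you want to pursue it you should either prove the missing lower bound as a self-contained lemma or, more economically, cite Ham--Song as the paper does.
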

\begin{proof} For (1), it is enough to bound the number of edges in $Q$. 
This is a reiteration of the proof in \cite{FLM}. Let $f \from G \to G$ be an irreducible train-track map where $G$ has $m$ edges. Recall the transition matrix $M_{m \times m} = (a_{ij})$ of $f$. Define a graph $\Gamma$ where vertices correspond to edges of $G$ and there are $a_{ij}$ edges from $e_j$ to $e_i$. Define $d_{out}(v_i)$ as the number of edges coming out of $v_i$, i.e. the combinatorial length of $f(e_i)$ and $d_{in}(v)$ the edges coming into $v_i$ the total number of times $e_i$ appears in the image of an edge. We use a theorem of Ham and Song \cite{HS} that 
\[ 1+ \sum_{v \in \Gamma_f} \bigl( \dout(v)-1 \bigr) = 1+ \sum_{v \in \Gamma_f} \bigl( \din(v)-1 \bigr)  \leq \lam_f^n \] 
If $f$ is $\sP$ small then $\sum_{v \in \Gamma_f} \bigl( \dout(v)-1 \bigr) \leq \sP -1$. A vertex $v \in \Gamma_f$ has $\dout(v)>1$ iff $v$ corresponds to a mixed edge of $f$. Therefore, the number of mixed edges is $\leq \sP$ and if $e$ is mixed $f(e) \leq \sP$. If $e$ is an edge in $G$ such that there are distinct edges $e',e''$ in $G$ so that $f(e') = f(e'')=e$ then the vertex corresponding to $e$ in $\Gamma_f$ has $\din(v_e) >1$ so the number of such vertices is $\leq \sP$. The set of dynamic edges is made up of images of mixed edges and edges with more than one edge mapping to them. The number of dynamic edges is thus bounded by $\sP^2+\sP$. Therefore the number of edges in $Q< \sP^2 + \sP$. Moreover, the $f$-image of every edge is bounded by $\sP$. Thus $f_Q$ is $\sP$--bounded. 
\end{proof}

\begin{proposition}\label{Prop:fQfinite2}
For every $\sP$ there is a constant $\sA$ and maps $g_1, \dots , g_\sA$ such that $f_Q = g_i$ for some $i$ for any $\sP$--small itt map $f$.
\end{proposition}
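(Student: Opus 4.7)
The plan is to combine the two conclusions of Proposition \ref{Prop:fQfinite1} into a straightforward finiteness count: the underlying graph $Q$ has bounded complexity, and on any such graph there are only finitely many maps $f_Q$ of bounded edge-length. Everything hinges on Proposition \ref{Prop:fQfinite1}; the rest is combinatorics.

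First I would enumerate the possible underlying graphs. By Proposition \ref{Prop:fQfinite1}(1), $Q$ has at most $\sE$ vertices and at most $\sE$ edges, where $\sE$ depends only on $\sP$. Up to graph isomorphism there are only finitely many such combinatorial graphs; call this collection $\mathcal{Q}_\sP = \{Q^{(1)}, \dots, Q^{(N)}\}$, where $N$ is bounded in terms of $\sE$ (hence in terms of $\sP$).

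Next I would bound, for a fixed $Q \in \mathcal{Q}_\sP$, the number of possible archetype maps on $Q$. By definition $f_Q$ fixes every vertex of $Q$, so the map is determined by its values on oriented edges. By Proposition \ref{Prop:fQfinite1}(2), for each edge $\varepsilon$ of $Q$ the image $f_Q(\varepsilon)$ is an edge path of combinatorial length at most $\sB$ in $Q$. Since $Q$ has at most $\sE$ (oriented) edges, the number of edge paths of length at most $\sB$ is bounded by $\sum_{k=0}^{\sB}(2\sE)^{k} \leq (2\sE+1)^{\sB+1}$. Because $Q$ has at most $\sE$ edges to assign, the number of maps $f_Q \from Q \to Q$ satisfying the archetype constraints is at most $\bigl((2\sE+1)^{\sB+1}\bigr)^\sE$.

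Finally I would set
\[
\sA \;=\; N \cdot \bigl((2\sE+1)^{\sB+1}\bigr)^\sE,
\]
a constant depending only on $\sP$, and let $g_1,\dots,g_\sA$ be a list of representatives of every pair (graph in $\mathcal{Q}_\sP$, admissible map on it). Any $\sP$--small itt map $f$ produces an archetype $f_Q$ whose domain lies in $\mathcal{Q}_\sP$ and whose edge images have length at most $\sB$, so $f_Q$ agrees with one of the $g_i$ up to relabeling. No step is an obstacle here; the only subtlety worth a sentence is that ``$f_Q = g_i$'' should be understood up to graph isomorphism of $Q$, which is harmless since the list $g_1,\dots,g_\sA$ was built to include one representative from each isomorphism class.
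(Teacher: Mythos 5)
Your proof is correct and takes essentially the same route as the paper: Proposition~\ref{Prop:fQfinite1} gives finitely many candidate graphs $Q$ and, on each, finitely many $\sB$--bounded self-maps, so the list of possible archetypes is finite. Your version simply makes the counting explicit, which the paper leaves implicit.
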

\begin{proof}
There are only finitely many graphs with less than $\sE$ edges. There are only finitely many self maps of those graphs that are $\sB$-bounded. Therefore, by Proposition \ref{Prop:fQfinite1} there are only finitely many possibilities for $f_Q$. 
\end{proof}

\begin{theoremA} 
For every $\sP>0$, there is a finite set of $2$--complexes, which are mapping tori of self maps of graphs, $X_1, \ldots , X_\sA$, ,so that the following holds. If $f \from G \to G$ is a $\sP$--small  irreducible train-track map on a graph $G$, then $M_f$ is homeomorphic to a $2$--complex that is obtained by surgery on some $X_i$.   
\end{theoremA}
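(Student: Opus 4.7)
The plan is to take the list $X_1, \dots, X_\sA$ to be the mapping tori of the finitely many archetypes $g_1, \dots, g_\sA$ produced by \propref{fQfinite2}. So given a $\sP$--small irreducible train-track map $f \from G \to G$, the first step is to identify its archetype: by \propref{fQfinite2} there exists $i$ such that $f_Q = g_i$, and hence the mapping torus $M_{f_Q}$ is exactly $X_i$. The goal then is to produce $M_f$ from $X_i$ via the surgery operation defined in \defref{Filling}.

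Next, I would appeal to \propref{fQsurgery}, which says that if we form the mapping torus $M_f = X$ and let $U$ be the graph of vertical edges, and if $S$ is the disjoint union of $|\calV_Q|$ one-edge circles with the canonical filling $\psi$ (every rectangle having height $1$), then $X(U,S,\psi)$ is precisely the mapping torus of $f_Q$, i.e.\ $X_i$. In other words, $X_i$ is obtained from $M_f$ by a single surgery.

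The theorem then follows immediately from the invertibility of surgery, \propref{SurgeryInv}: since $X_i = M_f(U,S,\psi)$, we have $M_f = X_i(S,U,\rho)$, so $M_f$ is obtained from $X_i$ by surgery. The constant $\sA$ here is the same constant supplied by \propref{fQfinite2}, so it depends only on $\sP$ and not on $n = \operatorname{rank}(\pi_1 G)$.

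Essentially every real difficulty has already been absorbed into the preceding material: the combinatorial bound on the archetype (via the Ham--Song inequality) gives the finiteness of the list, and the two propositions about surgery supply both the ``forward'' reduction $M_f \rightsquigarrow X_i$ and its inverse. So the only thing left to verify carefully is that the bijection between mapping-torus structures and the data $(G, f)$ is respected by the surgery construction, and that the cell-structure $X_i$ we write down really equals the mapping torus of $g_i$ as an abstract $2$--complex. The main point to be cautious about is book-keeping: making sure that the labelling of components of $U$ by vertices of $Q$ and the height function $ht_\psi$ are well defined so that the two surgeries $M_f \to X_i$ and $X_i \to M_f$ are genuinely inverse, but this is exactly the content of \propref{SameFP} and \propref{SurgeryInv}, so the argument is essentially a citation.
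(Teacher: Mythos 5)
Your proposal matches the paper's proof exactly: both take $X_i = M_{g_i}$ using the finite list from \propref{fQfinite2}, both use \propref{fQsurgery} to obtain $X_i$ from $M_f$ by surgery, and both invoke \propref{SurgeryInv} to conclude $M_f$ is obtained from $X_i$ by surgery. This is the same argument.
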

\begin{proof}
Let $g_1, \dots g_\sA$ be the maps from Proposition \ref{Prop:fQfinite2} define $X_i = M_{g_i}$. There is some $i$ such that $f_Q = g_i$. By Proposition \ref{Prop:fQsurgery} $X_i$ can be obtained from $M_f$ by surgery. By Proposition \ref{Prop:SurgeryInv} $M_f$ can be constructed  from $X_i$ by surgery. 
\end{proof}

\section{Bounded Presentations}

\begin{proposition}\label{Prop:finiteCW}
If $f \from G \to G$ is a $\sP$--small automorphism then $M_f$ is has a finite CW structure where the number of cells only depends on $\sP$.
\end{proposition}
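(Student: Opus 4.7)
The plan is to exhibit a finite $2$-dimensional CW complex $Y$ that is homotopy equivalent to $M_f$ and whose total cell count is bounded by a constant depending only on $\sP$; this is the content actually needed downstream for Theorem~B, where $1$-cells give generators and $2$-cells give relators. The construction will proceed in two stages: first reduce the number of $2$-cells by passing to the floor-plan $\xfp$, then reduce the number of $0$- and $1$-cells by collapsing spanning trees inside each component of the vertical graph $U$.

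First I would invoke Proposition~\ref{Prop:fQfinite1} to extract the key finiteness input: $G$ has at most $\sE$ dynamic edges and at most $\sE$ vertex orbits, where $\sE = \sE(\sP)$. Passing to the floor-plan $\xfp$, which is homeomorphic to $M_f$, the $2$-cells become the rectangles $R_\eps$ indexed by stacks (one per dynamic edge, so $\leq \sE$ of them) and the horizontal $1$-skeleton shrinks to just the $\leq \sE$ dynamic edges. The vertical $1$-skeleton $U$ decomposes into components $U_1, \dots, U_I$ corresponding to the $I \leq \sE$ vertex orbits, and each $U_i$ is a trap, hence contains a unique embedded cycle and is homotopy equivalent to a circle.

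Next I would choose, inside each $U_i$, a spanning tree $T_i$ by deleting the unique cycle-closing edge, set $T := \bigsqcup_i T_i$, and form the quotient $Y := \xfp / T$. Because each $T_i$ is a contractible subcomplex, $Y$ is homotopy equivalent to $\xfp \homeo M_f$. The inherited CW structure on $Y$ has at most $I \leq \sE$ $0$-cells (the collapse points), at most $\sE$ dynamic horizontal $1$-cells together with at most $I \leq \sE$ loop $1$-cells (one per $U_i$ coming from the undeleted cycle edge), and at most $\sE$ rectangular $2$-cells, for a total of at most $4\sE$ cells—a bound depending only on $\sP$.

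The one thing to verify carefully is that the rectangles continue to attach well after the collapse, i.e.\ that the boundary paths $\top(R_\eps)\cdot r \cdot \bot(R_\eps)^{-1} \cdot l^{-1}$ descend to paths in the $1$-skeleton of $Y$. This reduces to two bookkeeping checks: the bottom $\bot(R_\eps) = f(e_s)$ is a path composed of dynamic edges (because every edge in the image of a mixed edge is itself dynamic by definition, and the remaining case where the stack has a non-mixed bottom forces $f(e_s)$ to be a single dynamic edge), and each vertical side $l, r$ is a path entirely within a single component $U_i$ (because its two endpoints lie in a common $f$-orbit of vertices, by the definition of a stack). I do not anticipate any substantial obstacle beyond this bookkeeping; the essential combinatorial content—the uniform boundedness of the archetype in terms of $\sP$—has already been established in Proposition~\ref{Prop:fQfinite1}.
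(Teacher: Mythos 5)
Your proposal is correct but takes a genuinely different route from the paper. Both proofs pass to the floor-plan $\xfp$ and use Proposition~\ref{Prop:fQfinite1} to bound the rectangles, the dynamic horizontal edges, and the number of components of the vertical graph $U$ by $\sE(\sP)$. The paper then retains the underlying space $M_f$ and merely \emph{coarsens} its cell structure: it erases each vertex of $U$ that has valence $2$ and is not a $\rho$--image of a $K$--vertex, merging the two adjacent vertical edges; a count of high-valence, valence-$1$, and $K$--image vertices inside each trap shows at most $4\sE$ vertices survive, giving a genuine CW structure on $M_f$ itself with at most $\sE$ two-cells, $5\sE$ one-cells, and $4\sE$ zero-cells. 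You instead delete a spanning tree $T_i$ from each trap $U_i$ and collapse each $T_i$ to its own point, getting a complex $Y$ with at most $4\sE$ cells. (One notational caution: $\xfp/T$ with $T = \bigsqcup_i T_i$ must be read as the componentwise collapse, not the collapse of $T$ to a single point, else the homotopy equivalence fails; your subsequent cell count of $I$ zero-cells makes clear this is what you intend.) Your route is shorter and gives a slightly tighter bound, but $Y$ is only homotopy equivalent to $M_f$, not a CW structure on $M_f$ as the proposition literally asserts. As you note, the homotopy-equivalent version is all Theorem~B actually uses, since it just reads a presentation of $\pi_1$ off a $2$--skeleton, so for the downstream goal the two approaches agree; but your argument proves a variant of, rather than exactly, the stated proposition. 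The two bookkeeping checks you flag at the end---that the bottom path of each rectangle consists of dynamic edges and that each vertical side of a rectangle stays inside a single component of $U$---are both correct.
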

\begin{proof}
Let $X = M_f$, and $U$ be the vertical graph, let $Y = M_{f_Q}$ and $S$ the vertical graph. By Proposition \ref{Prop:fQsurgery}, $X = Y(S,U,\rho)$, thus, the number of 2-cells in $X_U$ equals the number of 2-cells in $Y_S$ which is the number of 2-cells in $Y$. This number is bounded in terms of $\sP$ by Proposition \ref{Prop:fQfinite1}. \\

A priori, the number of vertices in $U$ might be large. We show that we can remove all but boundedly many. Recall that  $\rho: K \to S$ sends vertices to vertices and respects the edge orientations. A vertex of $S$ is a \emph{natural vertex} if it is the image of a vertex in $K$,  or it has valence $\neq 2$. All unnatural vertices are removable. \\

$U$ is a disjoint union of $|\calV| \leq \sE$  traps. Every trap is a union of a circle and trees $T_1, \dots , T_k$  where each $T_i$ is attached to the circle at a single point of $T_i$. 
We call a vertex in $T_i$ a high-valence vertex if its valence is $>2$. It is easy to verify (for example by induction on the number of vertices that,

\begin{equation}\label{Eq:Ver} \# \{ \text{high valence vertices in } T_i \} < \# \{ \text{valence } 1 \text{ vertices in } T_i \} -1 
\end{equation}
Therefore the number of all high valence vertices in $U$ is smaller than the number of valence 1 vertices. \\

We show that if $v \in S$ has valence 1 then $v$ is an image of a vertex in $K$. Consider $\rho \from K \to U$, if $\rho$ is not onto then there is a vertex in $X$ with no horizontal edge adjacent to it. Thus $G$ has an isolated vertex, but we assumed that all vertices in $G$ have valence at least 3. Therefore, $\rho$ must be onto. Let $v$ be a valence 1 vertex of $U$ and $u$ a point in $K$ mapping to $v$. If $u$ is not a vertex then it is contained in the interior of a side $b_\eps$. Let $u'$ be the top vertex of $b_\eps$, then the segment $[u',u]$ maps to a directed path the terminated at $v$. Since $\rho$ preserves orientation, $v$ is a terminal point of some edge, but it is also an initial point of an edge since $U$ is a trap. So $v$ cannot have valence 1. Therefore, if $v$ is a valence 1 vertex then it is an image of a vertex. 

Thus, the number of valence 1 vertices is no greater than the number of vertices of $K$. The vertices of $K$ correspond to equivalence classes of directed edges in $Q$. Thus, the number of valence 1 vertices is bounded by $2\sE$. By equation (\ref{Eq:Ver}) the number of natural vertices is smaller than $4\sE$. Removing the unnatural vertices of $S$, we get an honest CW structure on $X$ with at most $\sE$ 2-cells, at most $5\sE$ edges, and $4\sE$ vertices. \\
 
\end{proof}

\begin{theoremB}
There is a number $\sM$ depending only on $\sP$ so that if $\phi \in \out$ is is a $\sP$--small  itt automorphism then $\Gamma_\phi$ has a presentation with at most $\sM$ generators and $\sM$ relations.
\end{theoremB}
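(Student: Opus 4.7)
The plan is to read the presentation directly off the finite CW structure produced in Proposition~\ref{Prop:finiteCW}, using the standard recipe that turns a finite $2$-dimensional CW model for a $K(\Gamma,1)$-candidate into a finite presentation of $\Gamma$. First I would recall that, by the discussion after Proposition~\ref{Prop:IdenMT}, $\Gamma_\phi = \pi_1(M_f)$ for any train-track representative $f\from G\to G$ of $\phi$, since the mapping torus construction realizes the HNN extension $F_n *_\phi$ topologically.

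Next I would invoke Proposition~\ref{Prop:finiteCW} to equip $M_f$ with a finite CW structure containing at most $4\sE$ vertices, at most $5\sE$ one-cells, and at most $\sE$ two-cells, where $\sE = \sE(\sP)$ is the constant from Proposition~\ref{Prop:fQfinite1}. The space $M_f$ is path-connected because the underlying graph $G$ of an irreducible train-track map is connected, so I can choose a spanning tree $T$ in the $1$-skeleton $M_f^{(1)}$. Such a tree uses at most $4\sE-1$ edges, leaving at most $5\sE-(4\sE-1)=\sE+1$ edges outside~$T$.

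Collapsing $T$ to a point produces a homotopy equivalent $2$-complex that is a wedge of at most $\sE+1$ circles (one per edge outside $T$) with at most $\sE$ two-cells attached. Reading each attaching map in this collapsed complex as a word in the generators (circles), I obtain a finite presentation of $\pi_1(M_f)=\Gamma_\phi$ with at most $\sE+1$ generators and at most $\sE$ relations. Setting $\sM := 5\sE$ gives the required universal bound depending only on $\sP$.

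There is no serious obstacle: the entire content of the theorem is concentrated in Proposition~\ref{Prop:finiteCW}, and the remaining step is the classical CW-to-presentation procedure. The only point to check is that the spanning-tree collapse and the enumeration of $2$-cells as relations use nothing beyond the uniform cell counts from that proposition, which are exactly the bounds quoted above.
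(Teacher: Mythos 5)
Your proposal is correct and follows essentially the same route as the paper: both arguments reduce Theorem~B entirely to Proposition~\ref{Prop:finiteCW} and then read a presentation of $\pi_1(M_f)=\Gamma_\phi$ off the resulting finite CW structure. You spell out the standard spanning-tree-collapse step that the paper leaves implicit, which makes the bound $\sM=5\sE$ explicit; the paper's additional remark that $\widetilde{M_f}$ is contractible is not actually needed for this step, since the presentation-complex recipe applies to any connected finite $2$-complex.
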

\begin{proof}
Consider $\widetilde{M_f}$ with the CW structure obtained in Proposition \ref{Prop:finiteCW}. The complex $\widetilde{M_f}$ is contractible, therefore, the presentation of the fundamental group may be read from the CW structure of $M_f$ whose number of cells is bounded by a quadratic  function of $\sP$. \end{proof}

\section{Examples} \label{Sec:Example}

\begin{example}\label{Exa:Rose}
Consider the rose with $n$ leaves $x_1, \ldots, x_n$ and the map $f$
defined by
\[
x_1 \to x_2 \to \ldots \to x_n \to x_1x_2. 
\]
This is an train-track map since it is positive. It is easy to verify that it is irreducible. The dilatation of $f$ is computed by declaring one of the edges, $x_1$, to have unit length and computing the other edge lengths by requiring that $f$ stretchs each edge by the factor $\lam$. Thus the lengths of $x_2, \dots , x_n$ are $\lam, \dots , \lam^{n-1}$ respectively and from $f(x_n) = x_1x_2$ we get the equation:
\begin{equation}\label{Eq:10}
\lam^n = 1+ \lam
\end{equation}
Let $t_n$ be the root of this equation. Clearly, 
$\displaystyle \lim_{n \to \infty}t_n = 1$. By taking log in equation (\ref{Eq:10}) we see that $\displaystyle \lim_{n \to \infty} n \log( t_n) = \log(2)$. \\
\end{example}

Define \[\underline{\lam}(n) = \inf \{ \lam_f \mid f \from G \to G \text{ with } rk(\pi_1 G ) =n \}\] The example above implies 
\[
\lim_{n \to \infty} n \log(\underline{\lam}(n) ) \leq \log(2).
\] 
In \secref{Lower} we provide a lower bound for the value of 
$\ulam(n)$.

\begin{question}
What are the asymptotics of $\underline{\lam}(n)$? In other words, what is the limit $\displaystyle \lim_{n \to \infty} n \log\underline{\lam}(n)$?
\end{question}

\begin{figure}[ht]
\begin{center}\input{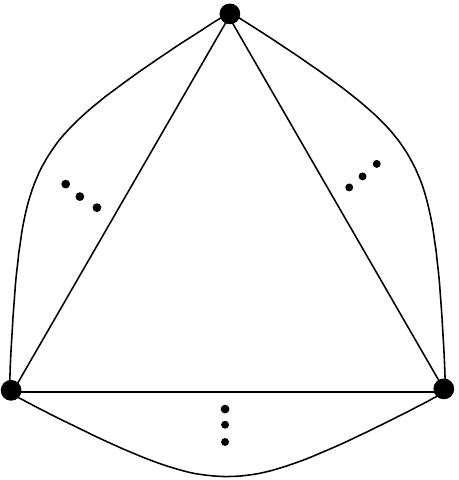_t}
\caption{The rank of this graph is the number of edges subtracted by 2. Edges map homeomorphically $x_1 \to x_2 \to x_3 \to x_4 \to \dots \to x_{n+2}$. }
\label{Fig:tri}
\end{center}
\end{figure}

\begin{example} \label{Exa:Triangle}
We give a slightly better example than the rose example. Consider the graph in Figure \ref{Fig:tri}. The rank of this graph is the number of edges subtracted by 2. Edges map homeomorphically 
\[
x_1 \to x_2 \to x_3 \to x_4 \to \dots \to x_n \to x_{n+1} \to x_{n+2}.
\] 
When $n \md 3 = 0$ then $x_{n+2}$ is parallel to $x_3$ and we set $f(x_{n+2}) = X_3 X_2$. If $n \md 3 = 1$ then $x_{n+2}$ is parallel to $x_1$ and we set $f(x_{n+2}) = X_1 X_3$. If $n \md 3 = 2$ then $x_{n+2}$ is parallel to $x_2$ and we set $f(x_{n+2}) = X_2 X_1$. It is easy to verify that $f$ is an irreducible train-track map in all of these cases. The equation for the dilatation of $f$ is one of the three equations:
\[ \begin{array}{l}
	\lam^{n+2} = \lam+\lam^2 \\
	\lam^{n+2} = 1+ \lam^2 \\
	\lam^{n+2} = 1 + \lam
\end{array}
\] The roots of these are slightly smaller than of Equation (\ref{Eq:10}). Howvever, they still satisfy $\displaystyle \lim_{n \to \infty} n \log\underline{\lam}(n) = \log(2)$. 
\end{example}



\section{A lower-bound for dilatation} \label{Sec:Lower}

In this section we provide a lower-bound for $\ulam(n)$. This
does not match the upper-bound provided by examples in the previous
section. However, we arrive naturally at \exaref{Triangle} as, possibly, 
the itt with the smallest dilation. 

Let $f \from G \to G$ be a irreducible train-track map. Endow $G$
with the natural metric as in Definition \ref{Def:naturalMetric} so that $f$ is $\lambda_f$--Lipschitz. We scale $G$ so that the 
smallst edge $e$ has a length $1$. Let $\sE$ be the number of edges of $G$ and 
$\sV$ be the number vertices of $G$. 
Note that every edge of $G$ is in the image of $f^k(e)$ for some 
$k < \sE$. This is because each $f^k(e)$ has to cover at least one new 
edge that is not covered by $f(e), \ldots, f^{k-1}(e)$ until all edges are covered. 

This provides a quick lower-bound for $\lambda_f$. Namely, let $e_{mix}$ be 
a mixed edge. Then $f(e_{mix})$ contains at least two edges and hence has a 
length of at least 2. Assuming $e_{mix}$ is contained in $f^k(e)$ we have
$$
\lambda_f^{k+1} = |f^{k+1}(e)| \geq |f(e_{mix})| \geq 2
$$
Since $k +1 \leq \sE \leq 3n-3$, we have
$$
\log \lambda_f \geq \frac {\log2}{3n-3}.
$$

Using the same type of argument, with more care, we can 
replace $\log(2)$ with $\log(3)$ in the above estimate. 
If there are two mixed edges or if a mixed edge is
mapped to 3 edges, we have $|f^{k+1}(e)| \geq 3$. That is in 
these cases, 
$$
\log \lambda_f \geq \frac {\log3}{3n-3}.
$$
The remaining case is when there is one mixed edge $e_{mix}$ and it is
mapped to exactly two edges. That is,
$$
e, f(e), f^2(e), \ldots, f^{\sE-1}(e)=e_{mix},
$$
are all single edges and $f(e_{mix})$ contains $e$ and an adjacent edge.
Let $e_i = f^i(e)$. 

Further, we have every vertex of $G$ is an image of vertex. This is not always true when there is more than one mixed edge. However, in our case, every vertex is and endpoint of some edge $e_i$, $i\geq 1$, and $e_i$ is a homeomorphic image of $e_{i-1}$. Hence, $f$ acts by a 
permutation on the vertex set. We claim that there has to be only one vertex
orbit. Otherwise, we have at least two vertex orbits $A$ and $B$. 
There are $3$--types of edges, those connecting a vertex in $A$ to $A$, 
$A$ to $B$ or $B$ to $B$. The type of an edge is preserved unless
the edge is a mixed edge. That is, there a mixed edge for each type. 
Since we have only one mixed edge, there should be only
one type of edge, which is from $A$ to $B$ ($G$ is connected). But this 
implies that $G$ is a bi-partite graph and the image of the mixed edge has 
a length at least $3$ which has been dealt with before. 

We now analyze the case where there is only one vertex orbit. 
Label the vertices $0, 1, .. (\sV-1)$ with $f(i) = i+1$ and let $e$ be the 
edge $[0,k]$. Then all edges are in the form $[i, i+k]$, but there may be
more than one edge of type $[i, i+k]$. Since $G$ is connected, we have 
$$\text{gcd}(k,\sV)=1.$$
The last edge $[E, E+k]$ is mapped to a path of length two where one 
edge is $e=[0,k]$ and the other one an edge adjacent to $e$
(either $[k, 2k]$ or $[-k,0]$). That is, the end points of $[E, E+k]$
are mapped to either $[-k, k]$ or $[0,2k]$. Either way, we have
$$
 \sV = 3k
$$
The above two equations imply that $\sV=3$. That is, our graph $G$
and the map $f$ are exactly those describe in \exaref{Triangle}.
We have
$$
\lambda_f \sim \frac{\log 2}{n},
$$
which is larger than $\frac{\log 3}{3n-3}$. 

In fact, it seems that having more than one mixed edge should increase 
$\lambda_f$ even further and it is reasonable to conjecture that 
\exaref{Triangle} is the itt with lowest dilatation number. 

\bibliographystyle{alpha}
\bibliography{ref2}
\end{document}